\newtheorem{definition}{Definition} 
\newtheorem{remark}{Remark} 
\newtheorem{theorem}{Theorem} 
\newtheorem{corollary}{Corollary}
\newcommand{\no}[1]{\widebar{#1}}
\def\coh{\pi}
\def\F{\mathcal{F}}
\def\I{\mathcal{I}}
\def\D{\mathcal{D}}
\journal{}
\begin{document}

\begin{frontmatter}



\title{Probabilistic squares and hexagons of opposition\\ under coherence\tnoteref{mytitlenote}}
\tnotetext[mytitlenote]{This is a substantially extended version of a paper  (\cite{PfSa17}) presented at SMPS 2016 (Soft Methods in Probability and Statistics 2016) conference held in Rome in September 12--14, 2016.}
\author{Niki Pfeifer}
\address{Ludwig-Maximilians-University Munich, Germany\\ \texttt{niki.pfeifer@lmu.de}}
\author{Giuseppe Sanfilippo}
\address{University of Palermo, Italy\\ \texttt{giuseppe.sanfilippo@unipa.it}}
\address{}
\begin{abstract}
Various semantics  for studying the square of opposition and the hexagon of opposition have been proposed recently.
We   interpret \emph{sentences} by  imprecise (set-valued) probability assessments  on a finite sequence of  conditional events. We introduce the \emph{acceptability} of a sentence within coherence-based probability theory. 
We analyze the relations of the square and of the hexagon in terms of acceptability. Then, we show how to construct probabilistic versions of the square and of the hexagon of opposition by forming suitable tripartitions of the set of all coherent assessments. Finally, as an application, we present new versions of the  square and of the hexagon involving generalized quantifiers.
\end{abstract}

\begin{keyword}
coherence \sep conditional events \sep hexagon of opposition \sep imprecise probability \sep square of opposition \sep quantified sentences  \sep tripartition    




\end{keyword}

\end{frontmatter}


\section{Introduction}
There is a long history of investigations on the square of opposition spanning over two millenia \cite{beziau2014,sep-square}. A \emph{square of opposition}  represents logical key relations among basic sentence types in a diagrammatic way. 
The basic sentence types, traditionally denoted by $A$ (universal affirmative: ``Every $S$ is $P$''), $E$ (universal negative: ``No $S$ is $P$''), $I$ (particular affirmative: ``Some $S$ are $P$''), and $O$ (particular negative: ``Some $S$ are not $P$''), constitute the corners of the square. The diagonals and the sides of the square of opposition are formed by the following logical relations among the basic sentence types:
$A$ and $E$ are \emph{contraries} (i.e., they cannot both be true), $I$ and $O$ are \emph{subcontraries} (i.e., they cannot both be false), $A$ and $O$ as well as $E$ and $I$ are \emph{contradictories} (i.e., they cannot both be true and they cannot both be false), $I$ is a \emph{subaltern} of $A$ and $O$ is a \emph{subaltern} of $E$ (i.e., $A$ entails $I$ and $E$ entails $O$; for a visual representation see, e.g., Figure~\ref{fig:Qsquare} below, and cover the probabilities for seeing the traditional square of opposition). 
In the early 1950ies, the square of opposition was expanded to the \emph{hexagon of opposition},  by adding a sentence at the top and another one at the bottom of the square (see, e.g., Figure~\ref{FIG:hexagon}). Recently, the square of opposition as well as the hexagon of opposition and its extensions have been investigated from various semantic points of view (see, e.g.,  \cite{beziau12,beziau2014,Ciucci2015,DuboisPrade2012,Dubois2015,Dubois2017,gilio16,novak14,murinova2016,Murinova2016a}). 
In this paper we present a probabilistic analysis of the square of opposition under coherence, introduce the hexagon of opposition under coherence, and study the semantics of basic key relations among quantified statements.

After preliminary notions  (Section~\ref{SEC:PRELIM}), we introduce, based on g-coherence, a (probabilistic) notion of sentences and their acceptability and show how to construct squares of opposition under coherence from suitable tripartitions  (Section~\ref{SEC:SQUARE}). Then, we present an application  of our square to the study of generalized quantifiers (Section~\ref{SEC:GSQUARE}). In Section~\ref{SEC:HEX} we introduce the \emph{hexagon of opposition} under coherence.   Section~\ref{SEC:CONCL} concludes the paper by some remarks on future work.
\section{Preliminary Notions}
\label{SEC:PRELIM}
The
coherence-based approach to probability and to other uncertain measures has been adopted by many
authors (see, e.g.,
\cite{biazzo00,biazzo05,capotorti14,CaLS07,coletti12,coleti14FSS,Coletti2016,coletti02,coletti15possibilistic,gilio02,GOPS16,gilio13ins,GiSa14,pfeifer13b,pfeifer13,pfeifer09b});
we therefore recall only selected key features of coherence and its generalizations in this section.\\
An event $E$ is
a two-valued logical entity which can be either true or false.
The indicator of $E$ is a two-valued numerical quantity which is 1,
or 0, according to  whether the event $E$ is true, or false, respectively. We
use the same symbols for events and their indicators. We denote by
$\top$ the sure event (i.e., tautology or logical truth) and by $\bot$ the impossible event (i.e., contradiction or logical falsehood).  Moreover, given two events $E$ and $H$, we
denote by $E\land H$ (resp., $E \vee H$) conjunction  (resp., disjunction). To simplify
notation, we will use the product $EH$ to denote the conjunction $E\land H$,  which also denotes the indicator of $E\wedge H$. We denote by $\no{E}$ the negation of $E$. 
\\
Given two events $E$ and $H$, with $H\neq\bot$, the
\emph{conditional event} $E|H$ is defined as a three-valued logical
entity which is \emph{true} if $EH$ (i.e., $E\wedge H$) is true,
\emph{false} if $\no{E}H$ is true, and \emph{indetermined} (void) if $H$ is false (\cite[p. 307]{definetti74}).
In terms of 
the betting metaphor, if you assess $p(E|H)=p$, then you are willing
to pay (resp., to receive) an amount $p$ and  to receive (resp., to pay) 1, or 0, or $p$, according to
whether $EH$ is true, or $\no{E}H$ is true, or $\no{H}$ is true (bet
called off), respectively. 
For defining coherence,  consider  a real function $p : \; \mathcal{F}
\, \rightarrow \, \mathcal{R}$, where $\mathcal{F}$ is an arbitrary
family of conditional events. Consider a finite sub-family
$\mathcal{F}_n = (E_1|H_1, \ldots, E_n|H_n) \subseteq
\mathcal{F}$, and the vector $\mathcal{P}_n =(p_1, \ldots, p_n)$,
where $p_i = p(E_i|H_i) \, ,\;\; i = 1, \ldots, n$. We denote by
$\mathcal{H}_n$ the disjunction $H_1 \vee \cdots \vee H_n$. 
With the pair $(\mathcal{F}_n, \mathcal{P}_n$) we associate the random gain
${\mathcal{G}} = \sum_{i=1}^n s_iH_i(E_i - p_i)$,
where $s_1, \ldots, s_n$ are $n$ arbitrary real numbers. 
$\mathcal{G}$ represents  the net gain of  $n$ transactions, where for each transaction   its  meaning    is specified by the sign of $s_i$ (\emph{plus} for  buying  or \emph{minus} for selling) 
 and its scaling  is specified by the magnitude  of $s_i$. 
Denoting by $G_{\mathcal{H}_n}$ the set of values of $\mathcal{G}$ restricted to $\mathcal{H}_n$, we recall
\begin{definition}\label{COER-BET} \rm The function $p$ defined on $\mathcal{F}$ is called {\em coherent}
	if and only if, for every integer $n$, for every finite sub-family $\mathcal{F}_n$
	$\subseteq \mathcal{F}$ and for every $s_1, \ldots, s_n$, it holds that:
	$\min  G_{\mathcal{H}_n} \leq 0 \leq \max G_{\mathcal{H}_n}$.
\end{definition}
We say that $p$ is \emph{incoherent} if and only if $p$ is not coherent.\\
As shown by Definition \ref{COER-BET}, a probability assessment is coherent if and only if, in any finite combination of $n$ bets, it does not happen that the values in the set  $G_{\mathcal{H}_n}$ are all positive, or all negative ({\em no Dutch Book}).  Moreover, coherence of $p(E|H)$ requires that $p(E|H) \in [0,1]$ for every $E|H \in \mathcal{F}$. If $p$ on $\mathcal{F}$ is coherent, we call it a {\em conditional probability on $\mathcal{F}$} (see, e.g., \cite{Berti2002,coletti02,Regazzini1985}). 
Notice that, if $p$ is coherent, then $p$ also satisfies all the well known properties of finitely additive conditional probability (while the converse does not hold; see, e.g., \cite[Example 13]{coletti02}  or \cite[Example 8]{Gilio1995}).
\\
In what follows  $\mathcal{F}$ will  denote  finite sequence of conditional events. Let
$\mathcal{F}=(E_1|H_1,\ldots,E_n|H_n)$. We denote by $\mathcal{P}$
a (precise) probability assessment $\mathcal{P}=(p_1,\ldots,p_n)$ on $\mathcal
{F}$, where
$p_j=p(E_j|H_j)\in[0,1]$, $j=1,\dots,n$.  Moreover, we denote by $\Pi$
the set of \emph{all coherent precise} assessments on $\mathcal{F}$.  We recall that when there are no logical relations among the
events $E_1,H_1,\ldots, E_n,H_n$ involved in $\mathcal{F}$, that is
$E_1,H_1,\ldots, E_n,H_n$ are logically independent, then the set $\Pi$
associated with $\mathcal{F}$ is the whole unit hypercube $[0,1]^n$.
If there
are logical relations, then the set $\Pi$ \emph{could be} a strict
subset of $[0,1]^n$. As it is well known $\Pi\neq\emptyset$;
therefore, $\emptyset\neq\Pi\subseteq[0,1]^n$. 
If not stated otherwise, we do not make any assumptions concerning  logical independence. 
\begin{definition}\label{DEF:IA}
An \emph{imprecise, or set-valued, assessment} ${\mathcal{I}}$ on a family of conditional events $\mathcal{F}$
 is a (possibly empty) set of precise 
assessments $\mathcal{P}$ on $\mathcal{F}$.
\end{definition}
{Definition~\ref{DEF:IA}} states that
an \emph{imprecise (probability) assessment} ${\mathcal{I}}$ on a sequence 
of $n$ conditional events
$\mathcal{F}$  is just a (possibly empty)
subset of
$[0,1]^n$ (\cite{gilio98,gilio15ecsqaru,gilio16}). For instance, think about an agent (like Pythagoras)  who considers
only rational numbers to evaluate the probability of an event $E|H$. Pythagoras' evaluation can be represented by the imprecise assessment $\mathcal{I}=[0,1]\cap \mathbb{Q}$ on $E|H$. Moreover, a  constraint like $p(E|H)>0$ can be represented by the imprecise assessment $\mathcal{I}=]0,1]$ on $E|H$.\\
 Given an imprecise assessment ${\mathcal{I}}$ we denote by
$\no{\mathcal{I}}$ the
\emph{complementary imprecise assessment} of ${\mathcal{I}}$, i.e.
$\no{\mathcal{I}}=[0,1]^n\setminus{\mathcal{I}}$. We now recall the notions of g-coherence and total coherence in the general case of imprecise (in the sense of set-valued) probability assessments \cite{gilio16}.
\begin{definition}[g-coherence]\label{DEF:GCOH}
Given a sequence of $n$ conditional events $\mathcal{F}$. An imprecise
assessment ${\mathcal{I}}\subseteq[0,1]^n$ on $\mathcal{F}$ is \emph{g-coherent} iff there exists a coherent precise assessment $\mathcal{P}$
on $\mathcal{F}$
such that $\mathcal{P}\in{\mathcal{I}}$.
\end{definition}
\begin{definition}[t-coherence]
An imprecise assessment ${\mathcal{I}}$ on $\mathcal{F}$ is \emph
{totally coherent}
(t-coherent) iff the following two conditions are satisfied:
(i) ${\mathcal{I}}$ is non-empty; (ii) if $\mathcal{P}\in{\mathcal
{I}}$, then $\mathcal{P}$ is a
coherent precise assessment on $\mathcal{F}$.
\end{definition}
\begin{definition}[t-coherent part]
Given a sequence of $n$ conditional events $\mathcal{F}$. Let $\Pi$ be the set of all coherent assessments on $\mathcal{F}$. 
We denote by $\coh:\wp([0,1]^n)\rightarrow \wp(\Pi)$ the function defined by $\coh(\mathcal{I})=\Pi\cap \mathcal{I}$, for any imprecise assessment $\mathcal{I}\in \wp([0,1]^n)$. Moreover, for each subset $\mathcal{I}\in\wp([0,1]^n)$ we call  $\coh(\mathcal{I})$  the  \emph{t-coherent part} of $\mathcal{I}$.
\end{definition}
Of course,   if $\coh(\mathcal{I})\neq \emptyset$, then $\mathcal{I}$ is g-coherent and  $\coh(\mathcal{I})$ is t-coherent.
\section{From Imprecise Assessments to the Square of Opposition}
\label{SEC:SQUARE}
In this section we consider imprecise assessments on  a given sequence  $\mathcal{F}$ of $n$ conditional events. In our approach, a sentence $s$ is a pair
 $(\mathcal{F},\mathcal{I})$, where $\mathcal{I}\subseteq [0,1]^n$ is an imprecise assessment on $\mathcal{F}$. We introduce the following  equivalence relation under t-coherence:
\begin{definition}
\label{DEF:EQ}
Given two sentences $s_1:(\mathcal{F},\mathcal{I}_1)$ and $s_2:(\mathcal{F}, \mathcal{I}_2)$,   $s_1$ and $s_2$ are \emph{equivalent (under t-coherence)}, denoted by $s_1\equiv s_2$,  iff $\coh(\mathcal{I}_{1})=\coh(\mathcal{I}_{2})$. 
\end{definition}
\begin{definition}\label{DEF:OPER}
Given three sentences $s:(\mathcal{F},\mathcal{I})$, $s_1:(\mathcal{F},\mathcal{I}_1)$, and $s_2:(\mathcal{F},\mathcal{I}_2)$.  We define: $s_1\wedge s_2:(\mathcal{F},\mathcal{I}_1\cap \mathcal{I}_2)$ (conjunction); $s_1\vee s_2:(\mathcal{F},\mathcal{I}_1\cup \mathcal{I}_2)$  (disjunction); $\no{s}:(\mathcal{F},\no{\mathcal{I}})$, where $\no{\mathcal{I}}=[0,1]^n\setminus \mathcal{I}$ (negation).
\end{definition}
\begin{remark}\label{REM:1}
As the basic operations among sentences  are defined by  set-theoretical operations, they inherit the corresponding properties (including associativity, commutativity, De Morgan's law, etc.). 
Moreover, as $\coh(\mathcal{I}_1\cap \mathcal{I}_2)=\coh(\mathcal{I}_1)\cap \coh(\mathcal{I}_2)$, by setting $s_1^*=(\mathcal{F},\coh(\mathcal{I}_1))$, $s_2^*=(\mathcal{F},\coh(\mathcal{I}_2))$ and $(s_1\wedge s_2)^*:(\mathcal{F},\coh(\mathcal{I}_1\cap \mathcal{I}_2 ))$, it follows that 
 $(s_1\wedge s_2)\equiv (s_1\wedge s_2)^*\equiv s_1^*\wedge s_2^*$. Likewise, $s_1\vee s_2\equiv (s_1\vee s_2)^*\equiv s_1^*\vee s_2^*$.
\end{remark}
As we interpret the basic sentence types involved in the square of opposition by imprecise probability assessments on sequences of conditional events, we will introduce the following notion of acceptability, which serves as a semantic bridge between basic sentence types and imprecise assessments:
\begin{definition}\label{DEF:ACC}
A sentence $s:(\mathcal{F},\mathcal{I})$ is (resp., is not)   \emph{acceptable}  iff the assessment $\mathcal{I}$ on $\mathcal{F}$ is (resp., is not)  g-coherent, i.e. $\coh(\mathcal{I})$ is not (resp., is) empty.
\end{definition}
\begin{remark}
If $s_1\wedge s_2$ is acceptable, then $s_1$ is acceptable and $s_2$ is acceptable. However, the converse does not hold, indeed  $s_1:(E|H, \{1\})$ is acceptable and $s_2:(E|H),\{0\})$ is acceptable, but $s_1 \wedge s_2:(E|H, \emptyset)$ is not acceptable (as $\coh(\emptyset)=\emptyset$).  
\end{remark}
\begin{definition}\label{DEF:4REL}
Given two sentences $s_1:(\mathcal{F},\mathcal{I}_1)$ and $s_2:(\mathcal{F},\mathcal{I}_2)$, we say,  under coherence:
$s_1$ and $s_2$ are \emph{contraries}  iff the sentence $s_1\wedge s_2$ is not acceptable;\footnote{ 
Some definitions of contrariety
additionally require that ``$s_1$ and $s_2$ can both be acceptable''. 
For reasons stated in \cite{gilio16}, we omit  this additional
requirement.
Similarly, \emph{mutatis mutandis}, in our definition of subcontrariety.}
 $s_1$ and $s_2$ are \emph{subcontraries} iff $\no{s}_1\wedge  \no{s}_2$ is not acceptable;
$s_1$ and $s_2$ are \emph{contradictories} iff  $s_1$ and $s_2$ are both, contraries and subcontraries;
 $s_2$ is a \emph{subaltern} of  $s_1$ iff the sentence  $s_1 \wedge  \no{s}_2$ is not acceptable.
\end{definition}
\begin{remark}\label{REM:REL}
By Remark~\ref{REM:1}, we observe that two sentences $s_1$ and $s_2$ are contraries if and only if $\coh(\I_1\cap \I_2)=
\coh(\I_1)\cap\coh(\I_2)=\emptyset$. Moreover, two sentences $s_1$ and $s_2$ are subcontraries if and only if $\coh(\no{\I}_1\cap \no{\I}_2)=\coh(\no{\I}_1)\cap \coh(\no{\I}_2)=\emptyset$, that is (by De Morgan's law) if and only if $\coh(\I_1)\cup \coh(\I_2)=
\Pi$. Then, two sentences $s_1$ and $s_2$ are contradictories
if and only if $\coh(\I_1)\cap\coh(\I_2)=\emptyset$ and 
$\coh(\I_1)\cup \coh(\I_2)=
\Pi$, that is if and only if $s_2=\no{s}_1$ (and, of course, $s_1=\no{s}_2$).
Given two sentences $s_1,s_2$ we also observe that  $s_2$ is a subaltern of $s_1$ if and only if $\Pi\cap(\mathcal{I}_1 \cap \no{\mathcal{I}}_2)=\emptyset$, which also amounts to say that $\Pi \cap \mathcal{I}_1 \subseteq \Pi \cap \mathcal{I}_2$, that is if and only if  $\coh(\I_1)\subseteq \coh(\I_2)$. For instance, $s_1\vee s_2$ is a subaltern of  $s_1$ and also of $s_2$; similarly, $s_1$ is  a subaltern of $s_1\wedge s_2$, and $s_2$ is  a subaltern of $s_1\wedge s_2$.
Furthermore, if $s_1$ is not acceptable, that is $\coh( \mathcal{I}_1)=\emptyset$, then any sentence $s_2$ is a subaltern of $s_1$. For example, the sentence $s_1:(E|\no{E},\{1\})$ is not acceptable because $\Pi=\{0\}$ and then any sentence $s_2:(E|\no{E},\mathcal{I})$, where $\mathcal{I}\subseteq [0,1]$, is a subaltern of $s_1$.
\end{remark}
Based on the relations given in Definition~\ref{DEF:4REL} we define a square of opposition as follows.
\begin{definition}\label{DEF:SQ}
Let  $s_k:(\mathcal{F},\mathcal{I}_k)$, $k=1,2,3,4$,  be four sentences. We call the ordered quadruple $(s_1,s_2,s_3,s_4)$  a \emph{square of opposition} (under coherence),  iff the following relations among the four sentences hold:
\begin{enumerate}
\item[(a)] $s_1$ and $s_2$ are contraries, i.e., $\coh(\mathcal{I}_1)\cap \coh (\mathcal{I}_2)=\emptyset$;
\item[(b)] $s_3$ and $s_4$ are subcontraries, i.e., $\coh(\mathcal{I}_3)\cup \coh (\mathcal{I}_4)=\Pi$;
\item[(c)] $s_1$ and $s_4$ are contradictories, i.e., $\coh(\mathcal{I}_1)\cap \coh (\mathcal{I}_4)=\emptyset$ and   
${\coh(\mathcal{I}_1)\cup\coh (\mathcal{I}_4)=\Pi}$; 
\newline $s_2$ and $s_3$ are contradictories, i.e., $\coh(\mathcal{I}_2)\cap\coh (\mathcal{I}_3)=\emptyset$ and 
 ${\coh(\mathcal{I}_2)\cup\coh (\mathcal{I}_3)=\Pi}$; 
\item[(d)] $s_3$ is a subaltern of $s_1$, i.e., $\coh(\mathcal{I}_1)\subseteq\coh (\mathcal{I}_3)$;\newline $s_4$ is a subaltern of $s_2$, i.e., $\coh(\mathcal{I}_2)\subseteq\coh (\mathcal{I}_4)$.
\end{enumerate}
\end{definition}
Figure~\ref{fig:QsquareS} shows the square of opposition based on Definition~\ref{DEF:SQ}.
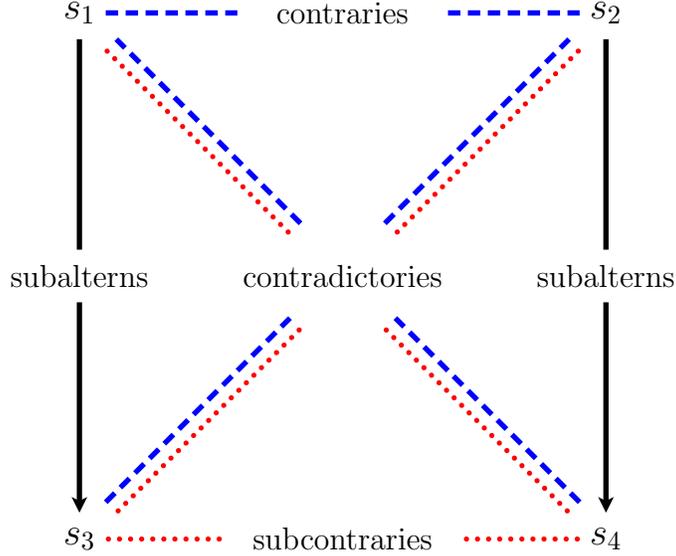
\begin{figure}[h]
\centering
\begin{center}
\ifx\JPicScale\undefined\def\JPicScale{.7}\fi
\psset{unit=\JPicScale mm}
\psset{linewidth=0.3,dotsep=1,hatchwidth=0.3,hatchsep=1.5,shadowsize=1,dimen=middle}
\psset{dotsize=0.7 2.5,dotscale=1 1,fillcolor=black}
\psset{arrowsize=1 2,arrowlength=1,arrowinset=0.25,tbarsize=0.7 5,bracketlength=0.15,rbracketlength=0.15}
\begin{pspicture}(0,-10)(100,120)
\rput(0,100){\large$s_1$}
\rput(0,0){\large$s_3$}
\rput(100,0){\large$s_4$}
\rput(100,100){\large$s_2$}
\rput(0,50){subalterns}
\rput(100,50){subalterns}
\rput(50,50){contradictories}
\rput(50,100){contraries}
\rput(50,0){subcontraries}
\psline[linewidth=2pt](0,95)(0,55)
\psline[linewidth=2pt]{->}(0,45)(0,05) 
\psline[linewidth=2pt](100,95)(100,55)  
\psline[linewidth=2pt]{->}(100,45)(100,05)

\psline[linestyle=dotted,linewidth=2pt,linecolor=red](7,5)(42,40) 
\psline[linestyle=dashed,linewidth=2pt,linecolor=blue](5,7)(40,42) 
\psline[linestyle=dotted,linewidth=2pt,linecolor=red](60,58)(95,93) 
\psline[linestyle=dashed,linewidth=2pt,linecolor=blue](58,60)(93,95) 
\psline[linestyle=dotted,linewidth=2pt,linecolor=red](93,5)(58,40)
\psline[linestyle=dashed,linewidth=2pt,linecolor=blue](95,7)(60,42)  
\psline[linestyle=dotted,linewidth=2pt,linecolor=red](40,58)(5,93) 
\psline[linestyle=dashed,linewidth=2pt,linecolor=blue](42,60)(7,95) 
\psline[linestyle=dotted,linewidth=2pt,linecolor=red](5,0)(27,0) 
\psline[linestyle=dotted,linewidth=2pt,linecolor=red](73,0)(95,0)
\psline[linestyle=dashed,linewidth=2pt,linecolor=blue](5,100)(30,100)
\psline[linestyle=dashed,linewidth=2pt,linecolor=blue](70,100)(95,100)
\end{pspicture}
\end{center}
\caption{Probabilistic square of opposition defined by the   quadruple $(s_1,s_2,s_3,s_4)$. The arrows indicate subalternation, dashed lines indicate contraries, and dotted lines indicate sub-contraries. Contradictories are indicated by combined dotted and dashed lines.}
\label{fig:QsquareS}
\end{figure}

\begin{remark}\label{REM:SQVERIFICATION}
Based on Definition~\ref{DEF:SQ}, we observe that in order to verify if a quadruple of sentences $(s_1,s_2,s_3,s_4)$, where $s_k:(\mathcal{F},\mathcal{I}_k)$ and $k=1,2,3,4$, is a square of opposition, it is necessary and sufficient to check that the quadruple $(s_1',s_2',s_3',s_4')$, where $s_k'=(\mathcal{F},\mathcal{I}_k')$, $\mathcal{I}_k'=\coh(\mathcal{I}_k)$, 
 is a square of opposition. Then, we say that two squares $(s_1,s_2,s_3,s_4)$ and $(s_1',s_2',s_3',s_4')$   \emph{coincide}  iff 
$\coh(\mathcal{I}_k)=\coh(\mathcal{I}_k')$ for each $k$. Moreover, based on Definition~\ref{DEF:SQ}, we observe that  $(s_1,s_2,s_3,s_4)$ is a square of opposition iff  $(s_2,s_1,s_4,s_3)$ is a square of opposition. 
\end{remark}
\begin{definition}
\def\Set{\mathfrak{S}}
An (ordered) \emph{tripartition} of a set $\Set$ is a triple   $(\mathcal{D}_1,\mathcal{D}_2,\mathcal{D}_3)$, where $\mathcal{D}_1$, $\mathcal{D}_2$,  and $\mathcal{D}_3$ are subsets of $\Set$, such that the following conditions are satisfied: (i) $\mathcal{D}_i\cap \mathcal{D}_j=\emptyset$, $i\neq j$ for all $i,j=1,2,3$; (ii);
$\mathcal{D}_1\cup \mathcal{D}_2 \cup \mathcal{D}_3=\Set$.
\end{definition}
\begin{theorem}\label{THM:SQDP}
Given any sequence of $n$ conditional events $\mathcal{F}$ and
a quadruple  $(s_1,s_2,s_3,s_4)$ of sentences,
with  $s_k:(\mathcal{F},\mathcal{I}_k)$, $k=1,2,3,4$. Define $\mathcal{D}_1=\coh(\mathcal{I}_1)$, $\mathcal{D}_2=\coh(\mathcal{I}_2)$, and $\mathcal{D}_3=\coh(\mathcal{I}_3)\cap\coh(\mathcal{I}_4)$.
Then, the quadruple $(s_1,s_2,s_3,s_4)$ is a square of opposition  if and only if
$(\mathcal{D}_1,\mathcal{D}_2,\mathcal{D}_3)$ is a tripartition  of (the non-empty set) $\Pi$ such that:  $\coh(\mathcal{I}_3)=\mathcal{D}_1\cup \mathcal{D}_3$, $\coh(\mathcal{I}_4)=\mathcal{D}_2\cup \mathcal{D}_3$.
\end{theorem}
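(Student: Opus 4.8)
The plan is to reduce everything to elementary set algebra inside the (non-empty) set $\Pi$, after noting that $\coh(\I_k)\subseteq\Pi$ for each $k$. Throughout I abbreviate $C_k=\coh(\I_k)$, so that $\D_1=C_1$, $\D_2=C_2$, and $\D_3=C_3\cap C_4$. The whole argument turns on one observation: the contradictory conditions in clause (c) of Definition~\ref{DEF:SQ} pin down $C_3$ and $C_4$ as the complements, relative to $\Pi$, of $C_2$ and $C_1$ respectively. Once this is in hand, both implications follow by distributing unions and intersections.

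For the forward direction I would assume $(s_1,s_2,s_3,s_4)$ is a square of opposition and use (c) to write $C_4=\Pi\setminus C_1$ and $C_3=\Pi\setminus C_2$; hence $\D_3=C_3\cap C_4=\Pi\setminus(C_1\cup C_2)$ by De Morgan inside $\Pi$. Pairwise disjointness of $\D_1,\D_2,\D_3$ is then immediate: $\D_1\cap\D_2=\emptyset$ is exactly the contrariety clause (a), while $\D_1\cap\D_3$ and $\D_2\cap\D_3$ vanish because $\D_3$ is disjoint from $C_1\cup C_2$. Covering holds since $\D_1\cup\D_2\cup\D_3=(C_1\cup C_2)\cup(\Pi\setminus(C_1\cup C_2))=\Pi$, so $(\D_1,\D_2,\D_3)$ is a tripartition of $\Pi$. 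Finally I recover the two side conditions: $\D_1\cup\D_3=C_1\cup(\Pi\setminus(C_1\cup C_2))=\Pi\setminus C_2=C_3$, and symmetrically $\D_2\cup\D_3=\Pi\setminus C_1=C_4$.

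For the converse I would assume $(\D_1,\D_2,\D_3)$ is a tripartition of $\Pi$ with $C_3=\D_1\cup\D_3$ and $C_4=\D_2\cup\D_3$, and verify clauses (a)--(d) of Definition~\ref{DEF:SQ} one at a time by straightforward distribution. Clause (a) is the disjointness $\D_1\cap\D_2=\emptyset$; clause (b) is $C_3\cup C_4=\D_1\cup\D_2\cup\D_3=\Pi$; clause (c) follows from $C_1\cap C_4=(\D_1\cap\D_2)\cup(\D_1\cap\D_3)=\emptyset$ together with $C_1\cup C_4=\Pi$, and symmetrically for $C_2,C_3$; clause (d) is the trivial inclusion $\D_1\subseteq\D_1\cup\D_3=C_3$, and likewise $\D_2\subseteq C_4$.

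The computations are routine, so the only place demanding care is the forward recovery of $\D_1\cup\D_3=C_3$: this step silently uses $C_1\cap C_2=\emptyset$ (equivalently clause (a)) to simplify $C_1\cup(\Pi\setminus(C_1\cup C_2))$ to $\Pi\setminus C_2$. It is also worth remarking that the subcontrariety clause (b) and the subalternation clause (d) turn out to be redundant given (a) and (c), which is precisely why the three sets $\D_1,\D_2,\D_3$ already encode the entire square; I would flag this but not lean on it, instead deriving the tripartition from the full list of square conditions for safety.
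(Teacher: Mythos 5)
Your proof is correct and follows essentially the same route as the paper's: both use clause (c) to identify $\coh(\mathcal{I}_3)$ and $\coh(\mathcal{I}_4)$ as the relative complements of $\coh(\mathcal{I}_2)$ and $\coh(\mathcal{I}_1)$ in $\Pi$, deduce $\mathcal{D}_3=\Pi\setminus(\mathcal{D}_1\cup\mathcal{D}_2)$, and verify the converse by distributing unions and intersections over the tripartition. The only cosmetic difference is that you recover the side conditions from (a) and (c) alone (the paper also invokes (d) at that point), and you correctly flag the one place where disjointness of $\coh(\mathcal{I}_1)$ and $\coh(\mathcal{I}_2)$ is silently used.
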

\begin{proof}
$(\Rightarrow)$.
We assume that $\mathcal{D}_1=\coh(\mathcal{I}_1)$, $\mathcal{D}_2=\coh(\mathcal{I}_2)$, and $\mathcal{D}_3=\coh(\mathcal{I}_3)\cap\coh(\mathcal{I}_4)$. Of course, $\mathcal{D}_i\subseteq \Pi$, $i=1,2,3$. We now prove that: $(i)$ $\mathcal{D}_1\cap \mathcal{D}_2=\emptyset$; $(ii)$ $\mathcal{D}_3= \Pi\setminus (\mathcal{D}_1\cup \mathcal{D}_2)$.
$(i)$ From condition (a) in Definition~\ref{DEF:SQ}, as $s_1$ and $s_2$ are contraries, it follows that  $\mathcal{D}_1\cap\mathcal{D}_2=\emptyset$.
$(ii)$ We first prove that $\mathcal{D}_3\subseteq \Pi\setminus (\mathcal{D}_1\cup \mathcal{D}_2)$. This trivially follows when  $\mathcal{D}_3=\emptyset$. If $\mathcal{D}_3\neq\emptyset$, then let $x\in \mathcal{D}_3=\coh(\mathcal{I}_3)\cap\coh(\mathcal{I}_4)$.  As $x\in\coh(\mathcal{I}_3)$, from condition (c) in  Definition~\ref{DEF:SQ}, we obtain $x\notin\coh(\mathcal{I}_2)$. Likewise, as $x\in\coh(\mathcal{I}_4)$, from condition (c) in  Definition~\ref{DEF:SQ}, we obtain $x\notin\coh(\mathcal{I}_1)$. Then,  $x\in \Pi$ and $x\notin(\coh(\mathcal{I}_1) \cup\coh(\mathcal{I}_2))$, that is $x\in \Pi\setminus (\mathcal{D}_1\cup \mathcal{D}_2)$.  We now prove that $\Pi\setminus (\mathcal{D}_1\cup \mathcal{D}_2)\subseteq\mathcal{D}_3$.
This trivially follows when $\Pi\setminus (\mathcal{D}_1\cup \mathcal{D}_2)=\emptyset$. If $\Pi\setminus (\mathcal{D}_1\cup \mathcal{D}_2)\neq\emptyset$, let
  $x\in \Pi\setminus (\coh(\mathcal{I}_1)\cup\coh(\mathcal{I}_2))$.  As $x\in \Pi \setminus\coh(\mathcal{I}_1)$, from condition (c) in  Definition~\ref{DEF:SQ}, we obtain $x\in\coh(\mathcal{I}_4)$.  Likewise, as $x\in \Pi \setminus\coh(\mathcal{I}_2)$ from condition (c) in  Definition~\ref{DEF:SQ}, we obtain $x\in \coh(\mathcal{I}_3)$. Then, $x\in (\coh(\mathcal{I}_3)\cap \coh(\mathcal{I}_4))=\mathcal{D}_3$. Therefore $(\mathcal{D}_1,\mathcal{D}_2,\mathcal{D}_3)$ is a tripartition of $\Pi$. 
By our assumption, $\coh(\mathcal{I}_1)=\mathcal{D}_1$ and $\coh(\mathcal{I}_2)=\mathcal{D}_2$. We observe that $\coh(\mathcal{I}_3)\cap \mathcal{D}_3=\mathcal{D}_3$; moreover, from conditions (c) and (d), we obtain  $\coh(\mathcal{I}_3)\cap \mathcal{D}_2=\coh(\mathcal{I}_3)\cap \coh(\mathcal{I}_2)=\emptyset$ and $\coh(\mathcal{I}_3) \cap \mathcal{D}_1=\coh(\mathcal{I}_1)\cap \coh(\mathcal{I}_3)=\coh(\mathcal{I}_1)=\mathcal{D}_1$; then
$\coh(\mathcal{I}_3)=\coh(\mathcal{I}_3)\cap (\mathcal{D}_1\cup \mathcal{D}_2 \cup \mathcal{D}_3)=\mathcal{D}_1\cup \mathcal{D}_3$. Likewise, we observe that $\coh(\mathcal{I}_4)\cap \mathcal{D}_3=\mathcal{D}_3$; moreover, from conditions (c),(d) in Definition~\ref{DEF:SQ}, we obtain  $\mathcal{D}_1\cap \coh(\mathcal{I}_4)=\coh(\mathcal{I}_1)\cap \coh(\mathcal{I}_4)=\emptyset$ and 
 $\mathcal{D}_2\cap \coh(\mathcal{I}_4)=\coh(\mathcal{I}_2)\cap \coh(\mathcal{I}_4)=\coh(\mathcal{I}_2)=\mathcal{D}_2$; then
$\coh(\mathcal{I}_4)=\coh(\mathcal{I}_4)\cap (\mathcal{D}_1\cup \mathcal{D}_2 \cup \mathcal{D}_3)=\mathcal{D}_2\cup \mathcal{D}_3$.\\
$(\Leftarrow)$
Assume that $(\mathcal{D}_1,\mathcal{D}_2,\mathcal{D}_3)$, where $\mathcal{D}_1=\coh(\mathcal{I}_1)$, $\mathcal{D}_2=\coh(\mathcal{I}_2)$, $\mathcal{D}_3=\coh(\mathcal{I}_3)\cap \coh(\mathcal{I}_4)$,  is a tripartition of $\Pi$ such that  $\mathcal{D}_1\cup \mathcal{D}_3=\coh(\mathcal{I}_3)$ and $\mathcal{D}_2\cup \mathcal{D}_3=\coh(\mathcal{I}_4)$,  we prove that the quadruple $(s_1,s_2,s_3,s_4)$ satisfies conditions (a), (b), (c), and  (d) in Definition~\ref{DEF:SQ}.
We observe that $\coh(\mathcal{I}_1)\cap \coh(\mathcal{I}_2)=\mathcal{D}_1\cap \mathcal{D}_2=\emptyset$, which coincides with (a). 
Condition (b) is satisfied because  $\coh(\mathcal{I}_3)\cup \coh(\mathcal{I}_4)=\mathcal{D}_1\cup\mathcal{D}_3 \cup \mathcal{D}_2\cup \mathcal{D}_3=\Pi$. Moreover, $\coh(\mathcal{I}_1)\cap \coh(\mathcal{I}_4)=\mathcal{D}_1\cap (\mathcal{D}_2\cup \mathcal{D}_3)=\emptyset$ and
$\coh(\mathcal{I}_1)\cup \coh(\mathcal{I}_4)=\mathcal{D}_1\cup (\mathcal{D}_2\cup \mathcal{D}_3)=\Pi$; likewise, $\coh(\mathcal{I}_2)\cap \coh(\mathcal{I}_3)=\mathcal{D}_2\cap (\mathcal{D}_1\cup \mathcal{D}_3)=\emptyset$ and
$\coh(\mathcal{I}_2)\cup \coh(\mathcal{I}_3)=\mathcal{D}_2\cup (\mathcal{D}_1\cup \mathcal{D}_3)=\Pi$. Thus, the conditions in  (c) are satisfied. Finally, $\coh(\mathcal{I}_1)=\mathcal{D}_1\subseteq \mathcal{D}_1\cup\mathcal{D}_3=\coh(\mathcal{I}_3)$ and $\coh(\mathcal{I}_2)=\mathcal{D}_2\subseteq \mathcal{D}_2\cup\mathcal{D}_3=\coh(\mathcal{I}_4)$ which satisfy conditions in (d). 
\end{proof}
A method to  construct a square of opposition by starting from a tripartition of $\Pi$ is given in the following result (see also \cite{DuboisPrade2012}).
\begin{corollary}\label{COR:TRIP} Given any sequence of $n$ conditional events $\mathcal{F}$ and a tripartition $(\mathcal{D}_1,\mathcal{D}_2,\mathcal{D}_3)$ of $\Pi$, then
the quadruple $(s_1,s_2,s_3,s_4)$, with  $s_k:(\mathcal{F},\mathcal{I}_k)$, $k=1,2,3,4$ and $\coh(\mathcal{I}_1)=\mathcal{D}_1$, $\coh(\mathcal{I}_2)=\mathcal{D}_2$,  $\coh(\mathcal{I}_3)=\mathcal{D}_1\cup \mathcal{D}_3$, $\coh(\mathcal{I}_4)=\mathcal{D}_2\cup \mathcal{D}_3$ is a square of opposition.
\end{corollary}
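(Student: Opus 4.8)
The plan is to deduce the corollary directly from the $(\Leftarrow)$ direction of Theorem~\ref{THM:SQDP}, which already does all the real work: it shows that whenever the sets $\coh(\mathcal{I}_1),\coh(\mathcal{I}_2),\coh(\mathcal{I}_3),\coh(\mathcal{I}_4)$ are arranged so that $\mathcal{D}_1=\coh(\mathcal{I}_1)$, $\mathcal{D}_2=\coh(\mathcal{I}_2)$, $\mathcal{D}_3=\coh(\mathcal{I}_3)\cap\coh(\mathcal{I}_4)$ form a tripartition of $\Pi$ with $\coh(\mathcal{I}_3)=\mathcal{D}_1\cup\mathcal{D}_3$ and $\coh(\mathcal{I}_4)=\mathcal{D}_2\cup\mathcal{D}_3$, the quadruple $(s_1,s_2,s_3,s_4)$ is a square of opposition. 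So the entire task reduces to checking that the data supplied by the corollary fit this template. First I would observe that $\mathcal{D}_1=\coh(\mathcal{I}_1)$ and $\mathcal{D}_2=\coh(\mathcal{I}_2)$ hold by hypothesis, and that $\coh(\mathcal{I}_3)=\mathcal{D}_1\cup\mathcal{D}_3$, $\coh(\mathcal{I}_4)=\mathcal{D}_2\cup\mathcal{D}_3$ are likewise given; it remains only to reconcile the \emph{given} third block $\mathcal{D}_3$ with the block $\coh(\mathcal{I}_3)\cap\coh(\mathcal{I}_4)$ that Theorem~\ref{THM:SQDP} builds from $\mathcal{I}_3$ and $\mathcal{I}_4$.

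The one genuine computation — and the only place where anything must be verified — is precisely this identification. I would expand, using distributivity,
\[
\coh(\mathcal{I}_3)\cap\coh(\mathcal{I}_4)=(\mathcal{D}_1\cup\mathcal{D}_3)\cap(\mathcal{D}_2\cup\mathcal{D}_3)=(\mathcal{D}_1\cap\mathcal{D}_2)\cup(\mathcal{D}_1\cap\mathcal{D}_3)\cup(\mathcal{D}_3\cap\mathcal{D}_2)\cup\mathcal{D}_3 .
\]
Since $(\mathcal{D}_1,\mathcal{D}_2,\mathcal{D}_3)$ is a tripartition of $\Pi$, the blocks are pairwise disjoint, so the three cross terms $\mathcal{D}_1\cap\mathcal{D}_2$, $\mathcal{D}_1\cap\mathcal{D}_3$, $\mathcal{D}_2\cap\mathcal{D}_3$ all vanish, leaving $\coh(\mathcal{I}_3)\cap\coh(\mathcal{I}_4)=\mathcal{D}_3$. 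Hence the third block manufactured inside Theorem~\ref{THM:SQDP} coincides with the given $\mathcal{D}_3$, and the triple $(\coh(\mathcal{I}_1),\coh(\mathcal{I}_2),\coh(\mathcal{I}_3)\cap\coh(\mathcal{I}_4))$ is exactly the tripartition $(\mathcal{D}_1,\mathcal{D}_2,\mathcal{D}_3)$ of $\Pi$ we started from.

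With this identity in hand, every hypothesis of the $(\Leftarrow)$ direction of Theorem~\ref{THM:SQDP} is met verbatim, and I would conclude immediately that $(s_1,s_2,s_3,s_4)$ is a square of opposition. I do not expect a substantive obstacle: the only subtlety is recognizing that the corollary's $\mathcal{D}_3$ and the theorem's internally-defined third block agree, which is exactly the step where pairwise disjointness of the tripartition is used. I would also add a one-line remark that sentences $s_k$ with the prescribed coherent parts always exist, so the statement is non-vacuous: since each prescribed set is a subset of $\Pi\subseteq[0,1]^n$, one may simply take $\mathcal{I}_1=\mathcal{D}_1$, $\mathcal{I}_2=\mathcal{D}_2$, $\mathcal{I}_3=\mathcal{D}_1\cup\mathcal{D}_3$, $\mathcal{I}_4=\mathcal{D}_2\cup\mathcal{D}_3$, for which $\coh(\mathcal{I}_k)=\Pi\cap\mathcal{I}_k=\mathcal{I}_k$ yields the required coherent parts.
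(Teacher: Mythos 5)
Your proof is correct and follows essentially the same route as the paper's: the paper's own one-line argument likewise observes that $\coh(\mathcal{I}_3)\cap\coh(\mathcal{I}_4)=\mathcal{D}_3$ and then invokes the $(\Leftarrow)$ direction of Theorem~\ref{THM:SQDP}. You merely spell out the distributivity-plus-disjointness computation that the paper leaves implicit, and add a harmless non-vacuousness remark.
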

\begin{proof}
The proof immediately follows by observing $\coh(\mathcal{I}_3)\cap \coh(\mathcal{I}_4)=\mathcal{D}_3$ and by the ($\Leftarrow$) side proof of Theorem~\ref{THM:SQDP}. 
\end{proof}
The following result allows to construct a square of opposition by starting from a tripartition of the whole set $[0,1]^n$:
\begin{corollary}\label{COR:NTRIP}
Given a tripartition
$(\mathcal{B}_1,\mathcal{B}_2,\mathcal{B}_3)$  of $[0,1]^n$, let  $\mathcal{I}_1=\mathcal{B}_1$,  $\mathcal{I}_2=\mathcal{B}_2$, $\mathcal{I}_3=\mathcal{B}_1\cup \mathcal{B}_3$, and  $\mathcal{I}_4=\mathcal{B}_2\cup \mathcal{B}_3$. For any sequence of $n$ conditional events $\mathcal{F}$, the quadruple $(s_1,s_2,s_3,s_4)$, where  $s_k:(\mathcal{F},\mathcal{I}_k)$, $k=1,2,3,4$, is a square of opposition.
\end{corollary}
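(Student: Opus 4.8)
The plan is to reduce this statement to Corollary~\ref{COR:TRIP} by intersecting the given tripartition of the full hypercube $[0,1]^n$ with the set $\Pi$ of all coherent assessments. The point is that the t-coherent part operator $\coh(\cdot)=\Pi\cap(\cdot)$ carries a tripartition of $[0,1]^n$ to a tripartition of $\Pi$, and it does so in exactly the way that lines up with the hypotheses of Corollary~\ref{COR:TRIP}.

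First I would set $\mathcal{D}_i:=\coh(\mathcal{B}_i)=\Pi\cap\mathcal{B}_i$ for $i=1,2,3$ and verify that $(\mathcal{D}_1,\mathcal{D}_2,\mathcal{D}_3)$ is a tripartition of $\Pi$. Disjointness follows since $\mathcal{D}_i\cap\mathcal{D}_j=\Pi\cap(\mathcal{B}_i\cap\mathcal{B}_j)=\emptyset$ for $i\neq j$, using that $(\mathcal{B}_1,\mathcal{B}_2,\mathcal{B}_3)$ is a tripartition. The covering property follows from $\mathcal{D}_1\cup\mathcal{D}_2\cup\mathcal{D}_3=\Pi\cap(\mathcal{B}_1\cup\mathcal{B}_2\cup\mathcal{B}_3)=\Pi\cap[0,1]^n=\Pi$, where the last step uses $\Pi\subseteq[0,1]^n$.

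Next I would compute the four t-coherent parts and match them to the hypotheses of Corollary~\ref{COR:TRIP}. Since $\coh(\mathcal{I})=\Pi\cap\mathcal{I}$ distributes over unions, we obtain $\coh(\mathcal{I}_1)=\Pi\cap\mathcal{B}_1=\mathcal{D}_1$, $\coh(\mathcal{I}_2)=\mathcal{D}_2$, $\coh(\mathcal{I}_3)=\Pi\cap(\mathcal{B}_1\cup\mathcal{B}_3)=\mathcal{D}_1\cup\mathcal{D}_3$, and $\coh(\mathcal{I}_4)=\mathcal{D}_2\cup\mathcal{D}_3$. These are precisely the four identities required by Corollary~\ref{COR:TRIP} for the tripartition $(\mathcal{D}_1,\mathcal{D}_2,\mathcal{D}_3)$ of $\Pi$, so that corollary immediately yields that $(s_1,s_2,s_3,s_4)$ is a square of opposition.

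There is no genuine obstacle here; the argument is essentially a direct corollary application. The only point demanding a moment of care is the observation that $\coh(\cdot)=\Pi\cap(\cdot)$ commutes with finite intersections and unions and that $\Pi\subseteq[0,1]^n$, which together guarantee that the image of a tripartition of the hypercube is a (possibly degenerate, i.e.\ with some empty blocks) tripartition of $\Pi$. Since the paper's notion of tripartition does not require the blocks to be non-empty, no extra hypothesis is needed beyond the standard fact that $\Pi\neq\emptyset$.
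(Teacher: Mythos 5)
Your proposal is correct and follows essentially the same route as the paper: both define $\mathcal{D}_i=\coh(\mathcal{B}_i)$, note that $(\mathcal{D}_1,\mathcal{D}_2,\mathcal{D}_3)$ is a tripartition of $\Pi$ with $\coh(\mathcal{I}_1)=\mathcal{D}_1$, $\coh(\mathcal{I}_2)=\mathcal{D}_2$, $\coh(\mathcal{I}_3)=\mathcal{D}_1\cup\mathcal{D}_3$, $\coh(\mathcal{I}_4)=\mathcal{D}_2\cup\mathcal{D}_3$, and then invoke Corollary~\ref{COR:TRIP}. You merely spell out the distributivity of $\coh(\cdot)=\Pi\cap(\cdot)$ over unions and intersections, which the paper leaves as an ``of course.''
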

\begin{proof}
Let $\mathcal{F}$ be any sequence of $n$ conditional events and $\Pi$ be the associated set of all coherent precise assessments. We set $\mathcal{D}_i=\coh(\mathcal{B}_i)$, $i=1,2,3$. Of course,
$(\coh(\mathcal{B}_1),\coh(\mathcal{B}_2),\coh(\mathcal{B}_3))$ is a tripartition of $\Pi$. Moreover, $\coh(\mathcal{I}_1)=\mathcal{D}_1$, $\coh(\mathcal{I}_2)=\mathcal{D}_2$,  $\coh(\mathcal{I}_3)=\mathcal{D}_1\cup \mathcal{D}_3$, $\coh(\mathcal{I}_4)=\mathcal{D}_2\cup \mathcal{D}_3$. Then, by applying  Corollary~\ref{COR:TRIP} we obtain that $(s_1,s_2,s_3,s_4)$ is a square of opposition. 
\end{proof}
Traditionally the square of opposition can be  constructed based on the fragmented square of opposition which requires only  the contrariety and contradiction relations (which goes back to  Aristotle's \emph{De Interpretatione} 6--7, 17b.17--26, see \cite[Section\ 2]{sep-square}).
This result also holds in our framework:
\begin{theorem}\label{THM:SquareAC}
The quadruple  $(s_1,s_2,s_3,s_4)$ of sentences, 
with  $s_k:(\mathcal{F},\mathcal{I}_k)$, $k=1,2,3,4$, is a square of opposition  iff  relations (a) and (c) in Definition~\ref{DEF:SQ} are satisfied.  
\end{theorem}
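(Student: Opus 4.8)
The plan is to prove the two directions separately, with the forward implication being immediate and all the content residing in the converse. For $(\Rightarrow)$, if $(s_1,s_2,s_3,s_4)$ is a square of opposition then by Definition~\ref{DEF:SQ} all four relations (a)--(d) hold, so in particular (a) and (c) hold; nothing further is required.

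For $(\Leftarrow)$, I would assume that (a) and (c) hold and derive (b) and (d). Writing $\coh(\I_k)$ for the t-coherent parts (each of which is a subset of $\Pi$ by definition of $\coh$), the crucial observation is that the contradiction clauses in (c) say precisely that $\coh(\I_4)$ and $\coh(\I_3)$ are the \emph{relative complements within $\Pi$} of $\coh(\I_1)$ and $\coh(\I_2)$, respectively: from $\coh(\I_1)\cap\coh(\I_4)=\emptyset$ together with $\coh(\I_1)\cup\coh(\I_4)=\Pi$, and using $\coh(\I_1),\coh(\I_4)\subseteq\Pi$, one gets $\coh(\I_4)=\Pi\setminus\coh(\I_1)$, and symmetrically $\coh(\I_3)=\Pi\setminus\coh(\I_2)$.

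Given this, condition (d) follows from (a) alone: since $\coh(\I_1)\subseteq\Pi$ and $\coh(\I_1)\cap\coh(\I_2)=\emptyset$, we have $\coh(\I_1)\subseteq\Pi\setminus\coh(\I_2)=\coh(\I_3)$, so by Remark~\ref{REM:REL} $s_3$ is a subaltern of $s_1$; the symmetric argument gives $\coh(\I_2)\subseteq\coh(\I_4)$. Finally, condition (b) follows by De Morgan together with (a): $\coh(\I_3)\cup\coh(\I_4)=(\Pi\setminus\coh(\I_2))\cup(\Pi\setminus\coh(\I_1))=\Pi\setminus(\coh(\I_1)\cap\coh(\I_2))=\Pi\setminus\emptyset=\Pi$, which is exactly subcontrariety of $s_3$ and $s_4$.

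I do not expect a genuine obstacle here, since the whole argument is a short, purely set-theoretic computation built on the translations recorded in Remark~\ref{REM:REL}. The only point demanding a little care is that all complements must be taken \emph{relative to $\Pi$} rather than to $[0,1]^n$: this is what makes the two contradiction clauses in (c) collapse into exact relative complements and, in turn, lets the single contrariety hypothesis (a) propagate simultaneously into both the subalternation conditions (d) and the subcontrariety condition (b).
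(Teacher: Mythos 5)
Your proof is correct and follows essentially the same route as the paper's: both directions are handled identically, with the converse deriving (d) and (b) from (a) and (c) by elementary set reasoning inside $\Pi$. The only cosmetic difference is that you first package the contradiction clauses of (c) as relative complements ($\coh(\mathcal{I}_4)=\Pi\setminus\coh(\mathcal{I}_1)$, $\coh(\mathcal{I}_3)=\Pi\setminus\coh(\mathcal{I}_2)$) and then apply De Morgan, whereas the paper argues element-wise and obtains (b) via the already-established (d); the substance is the same.
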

\begin{proof}
$(\Rightarrow)$ It follows directly from Definition \ref{DEF:SQ}. $(\Leftarrow)$  We prove that  (d) and (b) in Definition \ref{DEF:SQ} follow from  (a) and (c). If $\coh(\mathcal{I}_1)=\emptyset$, then of course  $\coh(\mathcal{I}_1)\subseteq \coh(\mathcal{I}_3)$. If $\coh(\mathcal{I}_1)\neq\emptyset$, 
let  $x\in \coh(\mathcal{I}_1)\subseteq \Pi$, from  (a)  it follows that $x\notin \coh(\mathcal{I}_2)$, and   since  (c) requires $\coh(\mathcal{I}_2)\cup \coh(\mathcal{I}_3)=\Pi$, we obtain  $x\in \coh(\mathcal{I}_3)$. Thus, $\coh(\mathcal{I}_1)\subseteq \coh(\mathcal{I}_3)$; likewise,  $\coh(\mathcal{I}_2)\subseteq \coh(\mathcal{I}_4)$. Therefore,  (d) is satisfied. Now we prove that (b) is satisfied, i.e. $\coh(\mathcal{I}_3)\cup \coh(\mathcal{I}_4)=\Pi$. Of course, $\coh(\mathcal{I}_3)\cup \coh(\mathcal{I}_4)\subseteq \Pi$.   Let $x\in \Pi$. If  $x\notin \coh(\mathcal{I}_3)$, then,  $x\in \coh(\mathcal{I}_2)$ from (c). Moreover, from (d), $x\in \coh(\mathcal{I}_4)$. Then, $\Pi\subseteq \coh(\mathcal{I}_3)\cup \coh(\mathcal{I}_4)$. Therefore, (b) is satisfied.  
\end{proof}

\begin{corollary}\label{COR:EASYSQUARE}
The quadruple  $(s_1,s_2,s_3,s_4)$ of sentences, 
with  $s_k:(\mathcal{F},\mathcal{I}_k)$, $k=1,2,3,4$, is a square of opposition  if and only if   $(s_1,s_2,s_3,s_4)=(s_1,s_2,\no{s}_2,\no{s}_1)$
with
$s_1$ and $s_2$ being contraries.
\end{corollary}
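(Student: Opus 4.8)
The plan is to obtain this as an almost immediate consequence of Theorem~\ref{THM:SquareAC} together with the characterization of contradictories recorded in Remark~\ref{REM:REL}; the whole argument will be a chain of equivalences, so both directions of the ``if and only if'' are handled simultaneously. By Theorem~\ref{THM:SquareAC}, the quadruple $(s_1,s_2,s_3,s_4)$ is a square of opposition if and only if relations (a) and (c) of Definition~\ref{DEF:SQ} hold. Relation (a) is precisely the assertion that $s_1$ and $s_2$ are contraries, so it remains to show that relation (c) is equivalent to the pair of identifications $s_3\equiv\no{s}_2$ and $s_4\equiv\no{s}_1$ (where, as in Remark~\ref{REM:SQVERIFICATION}, the equality $(s_1,s_2,s_3,s_4)=(s_1,s_2,\no{s}_2,\no{s}_1)$ in the statement is understood up to t-coherence, i.e.\ as coincidence of the corresponding $\coh$-parts).

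For this step I would invoke Remark~\ref{REM:REL}, which states that two sentences are contradictories if and only if each is the negation of the other (up to t-coherence). Concretely, the first clause of (c), namely that $s_1$ and $s_4$ are contradictories, holds iff $\coh(\mathcal{I}_4)=\Pi\setminus\coh(\mathcal{I}_1)=\coh(\no{\mathcal{I}}_1)$, that is iff $s_4\equiv\no{s}_1$; symmetrically, the second clause, that $s_2$ and $s_3$ are contradictories, holds iff $\coh(\mathcal{I}_3)=\coh(\no{\mathcal{I}}_2)$, that is iff $s_3\equiv\no{s}_2$.

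Combining these equivalences, the quadruple is a square of opposition iff $s_1$ and $s_2$ are contraries and the quadruple coincides, in the sense of Remark~\ref{REM:SQVERIFICATION}, with $(s_1,s_2,\no{s}_2,\no{s}_1)$, which is exactly the claimed statement. I do not expect any genuine obstacle here, since the corollary is essentially a repackaging of Theorem~\ref{THM:SquareAC}. The only point requiring a little care is interpreting the equality $(s_1,s_2,s_3,s_4)=(s_1,s_2,\no{s}_2,\no{s}_1)$ correctly, namely as equality of the t-coherent parts $\coh(\mathcal{I}_k)$ rather than as literal set equality of the imprecise assessments $\mathcal{I}_k$; this reading is legitimate precisely because, by Remark~\ref{REM:SQVERIFICATION}, whether a quadruple forms a square of opposition depends only on the $\coh(\mathcal{I}_k)$.
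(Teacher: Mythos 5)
Your proposal is correct and follows essentially the same route as the paper: both reduce the statement to Theorem~\ref{THM:SquareAC} (conditions (a) and (c) suffice) and then translate the contradictoriness clauses in (c) into $\coh(\mathcal{I}_4)=\Pi\setminus\coh(\mathcal{I}_1)$ and $\coh(\mathcal{I}_3)=\Pi\setminus\coh(\mathcal{I}_2)$, i.e.\ $s_4=\no{s}_1$ and $s_3=\no{s}_2$. Your explicit remark that the equality of quadruples must be read at the level of t-coherent parts is a point the paper leaves implicit, but it does not change the argument.
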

\begin{proof}
Of course, if $(s_1,s_2,s_3,s_4)$ is a square of opposition, then $s_1$ and $s_2$ are contraries. 
Moreover, $s_1$ and $s_4$ are contradictories, that is: $\coh(\mathcal{I}_1)\cap \coh(\mathcal{I}_4)=\emptyset$ and
$\coh(\mathcal{I}_1)\cup \coh(\mathcal{I}_4)=\Pi$. Therefore, $\Pi \setminus \coh(\mathcal{I}_4)=\coh(\mathcal{I}_1)$, which amounts to $s_4=\no{s}_1$. Similary, as $s_2$ and $s_3$ are contradictories, it holds that $s_3=\no{s}_2$.
Conversely, assume that $s_1$ and $s_2$ are contraries.
By instantiating Theorem~\ref{THM:SquareAC} with $s_3=\no{s}_2$ and with $s_4=\no{s}_1$, it follows that the quadruple $(s_1,s_2,\no{s}_2,\no{s}_1)$ is a square of opposition.
\end{proof}

\section{Square of Opposition  and Generalized Quantifiers}\label{SEC:GSQUARE}
Let $\mathcal{F}$ be a conditional event $P|S$ (where $S\neq \bot$)  and  $(\mathcal{B}_1(x),\mathcal{B}_2(x),\mathcal{B}_3(x))$  be a  tripartition of $[0,1]$, where $\mathcal{B}_1(x)=[x,1]$, $\mathcal{B}_2(x)=[0,1-x]$, $\mathcal{B}_3(x)=]1-x,x[$ and $x\in]\frac12,1]$ (see Figure~\ref{FIG:TRIPARTITIONB}). 

\begin{figure}
\centering
\begin{pspicture}(0,7)(9.5,9)
\definecolor{colour0}{rgb}{0.7,0.7,0.7}
\psline[linecolor=colour0, linewidth=0.1,tbarsize=0.5306cm ,bracketlength=0.15]{[-]}(0,7.5)(4,7.5)
\psline[linecolor=colour0, linewidth=0.1,tbarsize=0.5306cm ,bracketlength=0.15]{-}(4,7.5)(6,7.5)
\psline[linecolor=colour0, linewidth=0.1,tbarsize=0.5306cm ,bracketlength=0.15]{[-]}(6,7.5)(10,7.5)
\psline[linecolor=black, linewidth=0.05]{|-|}(0,7.5)(10,7.5)
\rput(8,8){\large $\mathcal{B}_1(x)$}
\rput(2,8){\large $\mathcal{B}_2(x)$}
\rput(5,8){\large $\mathcal{B}_3(x)$}
\rput(0,7){\large $0$}
\rput(10,7){\large $1$}
\rput(4,7){\large $1-x$}
\rput(6,7){\large $x$}
\end{pspicture}

\caption{\label{FIG:TRIPARTITIONB} Example of a tripartition $(\mathcal{B}_1(x), \mathcal{B}_2(x), \mathcal{B}_3(x))$ of $[0,1]$, with $x\in]\frac12,1]$.}
\end{figure}
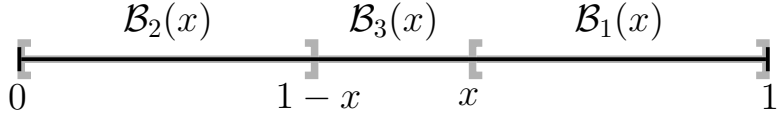

Consider the quadruple of sentences $(A(x),E(x),I(x),O(x))$, with $A(x):(P|S,\mathcal{I}_{A(x)})$, $E(x):(P|S,\mathcal{I}_{E(x)})$, $I(x):(P|S,\mathcal{I}_{I(x)})$, $O(x):(P|S,\mathcal{I}_{O(x)})$, where 
$\mathcal{I}_{A(x)}=\mathcal{B}_1(x)=[x,1]$,
$\mathcal{I}_{E(x)}=\mathcal{B}_2(x)=[0,1-x]$,
$\mathcal{I}_{I(x)}=\mathcal{B}_1(x) \cup \mathcal{B}_3(x)=]1-x,1]$, and $\mathcal{I}_{O(x)}=\mathcal{B}_2(x)\cup \mathcal{B}_3(x)=[0,x[$.
By applying  Corollary \ref{COR:NTRIP} with $(s_1,s_2,s_3,s_4)=(A(x),E(x),I(x),O(x))$, it follows that  $(A(x),E(x),I(x),O(x))$ is a square of opposition for any $x\in]\frac12,1]$ (see Figure \ref{fig:Qsquare}).
\begin{figure}[h]
\centering
\begin{center}
\ifx\JPicScale\undefined\def\JPicScale{.7}\fi
\psset{unit=\JPicScale mm}
\psset{linewidth=0.3,dotsep=1,hatchwidth=0.3,hatchsep=1.5,shadowsize=1,dimen=middle}
\psset{dotsize=0.7 2.5,dotscale=1 1,fillcolor=black}
\psset{arrowsize=1 2,arrowlength=1,arrowinset=0.25,tbarsize=0.7 5,bracketlength=0.15,rbracketlength=0.15}
\begin{pspicture}(0,-10)(100,120)
\rput(-2,100){$A(x)$}
\rput(0,106){$p(P|S)\geq x$}
\rput(-2,0){$I(x)$}
\rput(0,-6){$p(P|S)>1-x$}
\rput(102,0){$O(x)$}
\rput(100,-6){$p(P|S)<x$}
\rput(102,100){E(x)}
\rput(100,106){$p(P|S)\leq 1-x$}
\rput(0,50){subalterns}
\rput(100,50){subalterns}
\rput(50,50){contradictories}
\rput(50,100){contraries}
\rput(50,0){subcontraries}
\psline[linewidth=2pt](0,95)(0,55)
\psline[linewidth=2pt]{->}(0,45)(0,05) 
\psline[linewidth=2pt](100,95)(100,55)  
\psline[linewidth=2pt]{->}(100,45)(100,05)

\psline[linestyle=dotted,linewidth=2pt,linecolor=red](7,5)(42,40) 
\psline[linestyle=dashed,linewidth=2pt,linecolor=blue](5,7)(40,42) 
\psline[linestyle=dotted,linewidth=2pt,linecolor=red](60,58)(95,93) 
\psline[linestyle=dashed,linewidth=2pt,linecolor=blue](58,60)(93,95) 
\psline[linestyle=dotted,linewidth=2pt,linecolor=red](93,5)(58,40)
\psline[linestyle=dashed,linewidth=2pt,linecolor=blue](95,7)(60,42)  
\psline[linestyle=dotted,linewidth=2pt,linecolor=red](40,58)(5,93) 
\psline[linestyle=dashed,linewidth=2pt,linecolor=blue](42,60)(7,95) 
\psline[linestyle=dotted,linewidth=2pt,linecolor=red](5,0)(27,0) 
\psline[linestyle=dotted,linewidth=2pt,linecolor=red](73,0)(95,0)
\psline[linestyle=dashed,linewidth=2pt,linecolor=blue](5,100)(30,100)
\psline[linestyle=dashed,linewidth=2pt,linecolor=blue](70,100)(95,100)
\end{pspicture}
\end{center}
\caption{Probabilistic square of opposition $\mathbf{S}(x)$ defined on the four sentence types $(A(x),E(x),I(x),O(x))$  with the threshold $x\in]\frac12,1]$ (see also Table~\ref{Table:GS}). It provides a new interpretation of the traditional square of opposition (see, e.g., \cite{sep-square}), where the corners are labeled by ``Every $S$ is $P$'' (A), ``No $S$ is $P$'' (E), ``Some $S$ is $P$'' (I), and ``Some $S$ is not $P$'' (O).}
\label{fig:Qsquare}
\end{figure}
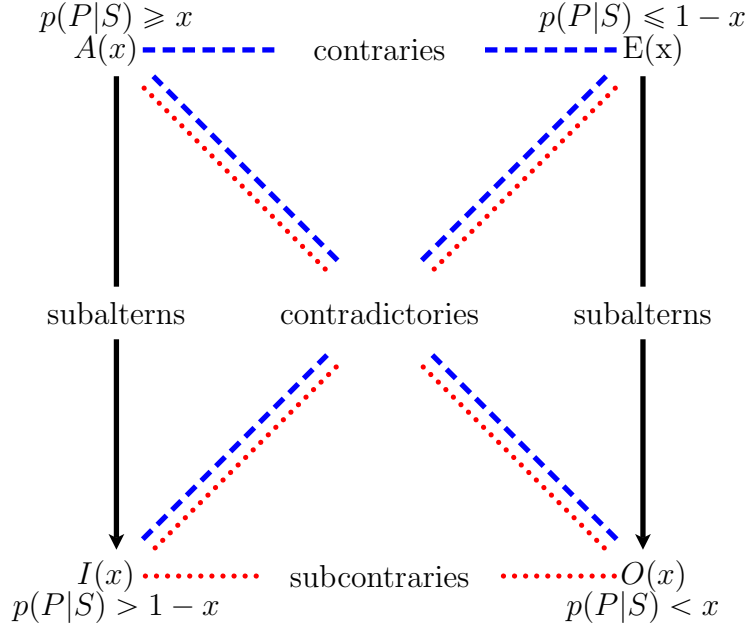
We recall that in presence of  some logical relations between $P$ and $S$ the set $\Pi$ could be a strict subset of $[0,1]$. In particular, we have  the following three cases (see, \cite{gilio13ijar,gilio13ins}):
(i) if $P\wedge S\neq \bot$ and $P\wedge S\neq S$, then $\Pi=[0,1]$; (ii) if  $P\wedge S=S$, then $\Pi=\{1\}$;
(iii) if $P\wedge S=\bot$, then $\Pi=\{0\}$.
The quadruple $(A(x),E(x),I(x),O(x))$, with the threshold $\frac{1}{2}<x\leq 1$, is a square of opposition in each of the three cases.
In particular we obtain: 
case (i)
$\coh(\mathcal{I}_{A(x)})=\mathcal{I}_{A(x)}$, $\coh(\mathcal{I}_{E(x)})=\mathcal{I}_{E(x)}$,$\coh(\mathcal{I}_{I(x)})=\mathcal{I}_{I(x)}$, and $\coh(\mathcal{I}_{O(x)})=\mathcal{I}_{O(x)}$;
case (ii):
$\coh(\mathcal{I}_{A(x)})=\{1\}$, $\coh(\mathcal{I}_{E(x)})=\emptyset$,$\coh(\mathcal{I}_{I(x)})=\{1\}$, and $\coh(\mathcal{I}_{O(x)})=\emptyset$;
case (iii): 
$\coh(\mathcal{I}_{A(x)})=\emptyset$, $\coh(\mathcal{I}_{E(x)})=\{1\}$,$\coh(\mathcal{I}_{I(x)})=\emptyset$, and $\coh(\mathcal{I}_{O(x)})=\{1\}$.
We note that in cases (ii) and (iii) we obtain degenerated  squares each, where---apart from the contradictory relations---all relations are strengthened
. Specifically, 
both  contrary and the subcontrary become contradictory relations. Moreover, both subalternation relations become symmetric. As by coherence $p(P|S)+p(\no{P}|S)=1$,  a sentence $s:(P|S,\mathcal{I})$  is equivalent to the sentence $s':(\no{P}|S,\no{\mathcal{I}})$, where $\no{\mathcal{I}}=[0,1]\setminus \mathcal{I}$. Table~\ref{Table:GS}
\begin{table}[!h]
\centering
\begin{tabular}{llll}\hline \hline
\multicolumn{2}{l}{Sentence} & Probability constraints & Assessment on $P|S$  \\
        \hline
$A(x):$& $(Q_{\geq x}$ $S$ are $P$) &$p(P|S)\geq x$ &    ${\mathcal{I}}_{A(x)}=[x,1]$\\
$E(x):$& $(Q_{\geq x}$ $S$ are not $P$) & $p(\no{P}|S)\geq x$ &   ${\mathcal{I}}_{E(x)}=[0,1-x]$ \\
$I(x):$& ($Q_{>1-x}$ $S$ are $P$) & $p(P|S)>1-x$ & ${\mathcal{I}}_{I(x)}=]1-x,1]$       \\
$O(x):$& ($Q_{>1-x}$ $S$ are not $P$) & $p(\no{P}|S)>1-x$ &${\mathcal{I}}_{O(x)}=[0,x[$\\ 
\hline        
$A(1):$& (Every $S$ is $P$) &$p(P|S)=1$ &    $\mathcal{I}_A=\{1\}$\\
$E(1):$& (No $S$ is  $P$) & $p(\no{P}|S)=1$ &   $\mathcal{I}_E=\{0\}$ \\
$I(1):$& (Some $S$ is $P$) & $p(P|S)>0$ & $\mathcal{I}_I=]0,1]$       \\
$O(1):$& (Some $S$ is not $P$) & $p(\no{P}|S)>0$ &$\mathcal{I}_O=[0,1[$        \\
 \hline \hline
 \end{tabular}
\caption{Probabilistic interpretation of the sentence types $A$, $E$, $I$, and $O$ involving generalized quantifiers $Q$ defined by a threshold $x$ (with $x\in]\frac12,1]$) on the subject $S$ and predicate $P$ and the respective imprecise probabilistic assessments ${\mathcal{I}}_{A(x)}$, $\mathcal{I}_{E(x)}$, $\mathcal{I}_{I(x)}$, and $\mathcal{I}_{O(x)}$  on the conditional event $P|S$ (above). When $x=1$, we obtain our probabilistic interpretation of the  traditional sentence types $A$, $E$, $I$, and $O$  (below).}
\label{Table:GS}
\end{table}
 presents  generalization of  basic sentence types $A(x)$, $E(x)$, $I(x)$, and $O(x)$ involving generalized quantifiers $Q$. The generalized quantifiers are defined on a threshold $x>\frac12$. The  value of the threshold may be context dependent and provides lots of flexibility for modeling various instances of generalized quantifiers (like ``most'', ``almost all''). 
 
 Given two thresholds $x_1$ and $x_2$, with  $\frac{1}{2}<x_2<x_1\leq 1$, we analyze the relations  among the  same sentence types in the two squares of opposition $\mathbf{S}(x_1)$ and $\mathbf{S}(x_2)$, with $\mathbf{S}(x_i)=(A(x_i),E(x_i),I(x_i),O(x_i))$, $i=1,2$. 
 It can be easily proved that:
 $A(x_2)$ is a subaltern of   $A(x_1)$,
 $E(x_2)$ is a subaltern of   $E(x_1)$,
 $I(x_1)$ is a subaltern of   $I(x_2)$, and 
 $O(x_1)$ is a subaltern of   $O(x_2)$. 
 In the extreme case $x=1$ we obtain the probabilistic interpretation under coherence of the basic sentence types involved in the traditional square of opposition $(A,E,I,O)$ (see \cite{gilio15ecsqaru,gilio16} for the \emph{default square of opposition}, involving defaults and negated defaults). 
 
 In agreement with De Morgan (as pointed out by \cite{DuboisPrade2012}) by the quadruple $(a,e,i,o)$ we denotes the square of opposition obtained from $(A,E,I,O)$ when the events $P$ and $S$ are replaced by $\no{P}$ and $\no{S}$, respectively. Specifically, $a:(\no{P}|\no{S},\{1\})$, $e:(\no{P}|\no{S},\{0\})$, $i:(\no{P}|\no{S},]0,1])$, and $o:(\no{P}|\no{S},[0,1[)$. 

In the general case when $P$ and $S$ are  logically independent it can be proved that the set of all coherent assessments on $(P|S,\no{P}|\no{S})$ is the square $[0,1]^2$ (see e.g. \cite{coletti02}; see also \cite[Proposition 1]{coletti12} \cite[Theorem 4]{coleti14FSS}). Thus, in the general case there are no relations  between any two sentences $s_1$ and $s_2$, where $s_1\in\{A,E,I,O\}$ and $s_2\in\{a,e,i,o\}$. Therefore,  the two squares $(A,E,I,O)$ and $(a,e,i,o)$ do not form a cube of opposition (with these two squares as opposite facing sides).

\section{Hexagon of Opposition}
 \label{SEC:HEX}
Compared to the millennia long history of investigations on the square of opposition, the hexagon of opposition was discovered fairly recently, namely in the  1950ies. The hexagon generalizes the square by adding the disjunction of the top vertices of the square to build a new vertex at the top and by adding the conjunction of the bottom vertices of the square to build a new vertex at the bottom.
According to B\'{e}ziau (\cite{beziau2012}), the hexagon of opposition was introduced by the French priest and logician Augustin Sesmat (\cite{sesmat1951}) and by the philosopher Robert Blanch\'e (\cite{blanche1952}), who worked out the full structure of the hexagon of opposition (for his main work on the hexagon of opposition see \cite{blanche1966}). Jaspers and Seuren (\cite{jaspers16}) trace the history of the hexagon  back also to the American philosopher Paul Jacoby (\cite{jacoby50}, see also \cite{DuboisPrade2012}).  
In this section we will use the tools developed in Section~\ref{SEC:SQUARE}, to  construct a \emph{hexagon of opposition} by starting from a square of opposition. More precisely,  given a traditional square of opposition  $(A,E,I,O)$, by setting $U=A\lor E$, $Y=I\land O$, the  tuple $(A,E,I,O,U,Y)$ defines a hexagon of opposition. Accordingly,  we define the (probabilistic) hexagon of opposition in our approach as follows: 
\begin{definition}[Hexagon of opposition]\label{DEF:HEX}
Let  $s_k:(\mathcal{F},\mathcal{I}_k)$, $k=1,2,3,4,5,6$,  be six sentences. We call the ordered tuple $(s_1,s_2,s_3,s_4,s_5,s_6)$  a \emph{hexagon of opposition} (under coherence),  if and only if  the following relations among the six sentences hold:
\begin{enumerate}
\item[(i)] $(s_1,s_2,s_3,s_4)$ is a square of opposition;
\item[(ii)] $s_5=s_1 \vee s_2$;
\item[(iii)] $s_6=s_3 \wedge s_4$.
\end{enumerate}
\end{definition}

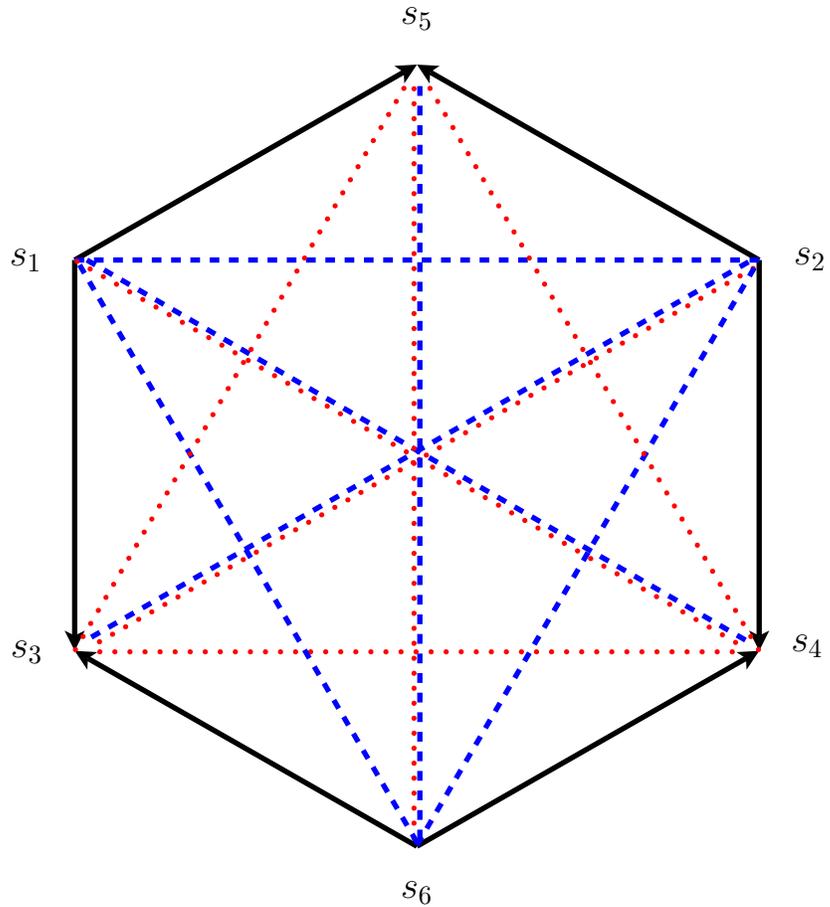
\begin{figure}
\ifx\JPicScale\undefined\def\JPicScale{1.3}\fi
\psset{unit=\JPicScale mm}
\psset{linewidth=0.3,dotsep=1,hatchwidth=0.3,hatchsep=1.5,shadowsize=1,dimen=middle}
\psset{dotsize=0.7 2.5,dotscale=1 1,fillcolor=black}
\psset{arrowsize=1 2,arrowlength=1,arrowinset=0.25,tbarsize=0.7 5,bracketlength=0.15,rbracketlength=0.15}
\begin{pspicture}(0,0)(100.2,89.7)
\psbezier[linewidth=2pt]{<-}(25,25)(25,25)(25,58.12)(25,65)
\psline[linewidth=2pt]{<-}(95,25)(95,65)
\psline[linewidth=2pt]{<-}(25,25)(60,5)
\psline[linewidth=2pt]{<-}(60,85)(95,65)
\psline[linewidth=2pt]{->}(25,65)(60,85)
\psline[linewidth=2pt]{->}(60,5)(95,25)
\rput(100.2,65.1){\large $s_2$}
\rput(60,89.7){\large $s_5$}
\rput(20,65){\large $s_1$}
\rput(20.1,24.9){\large $s_3$}
\rput(60,0.3){\large $s_6$}
\rput(99.9,25.5){\large $s_4$}
\psline[linewidth=2pt,linestyle=dashed,dash=1 1,linecolor=blue](25,65)(95,65)
\psline[linewidth=2pt,linestyle=dotted,linecolor=red](27.3,24.9)(92.4,24.9)
\psline[linewidth=2pt,linestyle=dashed,dash=1 1,linecolor=blue](26.7,26.4)(93.9,64.8)
\psline[linewidth=2pt,linestyle=dotted,linecolor=red](27.3,25.8)(94.5,64.2)
\psline[linewidth=2pt,linestyle=dashed,dash=1 1,linecolor=blue](26.25,65)(93.6,26.1)
\psline[linewidth=2pt,linestyle=dotted,linecolor=red](25,65)(92.7,25.8)
\psline[linewidth=2pt,linestyle=dotted,linecolor=red](59.7,82.8)(59.7,5.7)
\psline[linewidth=2pt,linestyle=dashed,dash=1 1,linecolor=blue](60.3,82.8)(60.3,5.7)
\psline[linewidth=2pt,linestyle=dashed,dash=1 1,linecolor=blue](25.5,64.5)(60,5.4)
\psline[linewidth=2pt,linestyle=dashed,dash=1 1,linecolor=blue](94.5,64.2)(60,5.1)
\psline[linewidth=2pt,linestyle=dotted,linecolor=red](58.8,82.8)(25,25)
\psline[linewidth=2pt,linestyle=dotted,linecolor=red](61.2,82.8)(95,25)
\end{pspicture}
\caption{\label{FIG:HexS}Probabilistic hexagon of opposition defined on the six sentence types $(s_1,s_2,s_3,s_4,s_5,s_6)$, where $(s_1,s_2,s_3,s_4)$ is a square of opposition, $s_5=s_1 \vee s_2$, and $s_6=s_3 \land s_4$
 (see Definition~\ref{DEF:HEX}). For the meaning of the lines see Figure~\ref{fig:QsquareS}.}
\end{figure}
Figure~\ref{FIG:HexS} shows the probabilistic hexagon of opposition as given by Definition~\ref{DEF:HEX}. 

\begin{theorem}\label{THM:HEX}
Let  $s_k:(\mathcal{F},\mathcal{I}_k)$, $k=1,2,3,4,5,6$,  be six sentences. The tuple $(s_1,s_2,s_3,s_4,s_5,s_6)$  is a hexagon of opposition,  if and only if
$(s_1,s_2,s_3,s_4,s_5,s_6)=(s_1,s_2,\no{s}_2,\no{s}_1,s_1 \vee s_2,\no{s}_1\wedge\no{s}_2)$, with  $s_1$ and $s_2$ being contraries.
\end{theorem}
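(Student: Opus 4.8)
The plan is to derive the statement almost immediately from Corollary~\ref{COR:EASYSQUARE}, since Definition~\ref{DEF:HEX} builds the hexagon on top of a square of opposition together with two derived vertices. Both directions should then amount to unfolding the definition and substituting the known characterization of squares, so I expect no hard analytic content.

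For the forward direction I would assume that $(s_1,s_2,s_3,s_4,s_5,s_6)$ is a hexagon. Clause (i) of Definition~\ref{DEF:HEX} says that $(s_1,s_2,s_3,s_4)$ is a square, so Corollary~\ref{COR:EASYSQUARE} applies and forces $s_3=\no{s}_2$, $s_4=\no{s}_1$, with $s_1$ and $s_2$ contraries. Clauses (ii) and (iii) then read off $s_5=s_1\vee s_2$ and $s_6=s_3\wedge s_4=\no{s}_2\wedge\no{s}_1$. The only step that is not purely mechanical is rewriting this bottom vertex as $\no{s}_1\wedge\no{s}_2$; this is legitimate because, by Definition~\ref{DEF:OPER}, conjunction of sentences is intersection of the associated imprecise assessments, and intersection is commutative, so $\no{s}_2\wedge\no{s}_1$ and $\no{s}_1\wedge\no{s}_2$ are literally the same sentence (same $\mathcal{F}$, same assessment), as already noted in Remark~\ref{REM:1}. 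This yields the claimed normal form.

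For the converse I would start from the stated tuple with $s_1,s_2$ contraries and verify the three clauses of Definition~\ref{DEF:HEX} in turn. Since $s_3=\no{s}_2$ and $s_4=\no{s}_1$ and $s_1,s_2$ are contraries, the ``if'' part of Corollary~\ref{COR:EASYSQUARE} gives that $(s_1,s_2,s_3,s_4)$ is a square, establishing (i); clause (ii) is immediate from $s_5=s_1\vee s_2$; and clause (iii) holds because $s_3\wedge s_4=\no{s}_2\wedge\no{s}_1=\no{s}_1\wedge\no{s}_2=s_6$, again by commutativity of intersection.

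The main --- and essentially only --- point requiring a word of care is the commutative rewriting of $s_6$; beyond that the theorem is a bookkeeping corollary of the square characterization. I do not anticipate any genuine obstacle, and in particular no coarsening to the t-coherent parts is needed here, since $s_5$ and $s_6$ are defined by literal set operations rather than only up to the equivalence $\equiv$ of Definition~\ref{DEF:EQ}.
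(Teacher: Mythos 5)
Your proposal is correct and follows essentially the same route as the paper's own proof: both directions reduce to Corollary~\ref{COR:EASYSQUARE} applied to the underlying square, with clauses (ii) and (iii) of Definition~\ref{DEF:HEX} read off directly. The extra remark on the commutativity of $\wedge$ (via intersection of assessments) is a harmless elaboration of a step the paper takes silently.
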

\begin{proof}
 $(\Rightarrow)$. Let  $(s_1,s_2,s_3,s_4,s_5,s_6)$ be a hexagon of opposition. Then, as $(s_1,s_2,s_3,s_4)$ is a square of opposition,   $s_1$ and $s_2$ are contraries. Moreover, by Corollary~\ref{COR:EASYSQUARE}, it follows that $(s_1,s_2,s_3,s_4)=(s_1,s_2,\no{s}_2,\no{s}_1)$. Then, by Definition~\ref{DEF:HEX}, $s_5=s_1 \vee s_2$ and $s_6=s_3 \wedge s_4=\no{s}_1\wedge\no{s}_2$. Therefore, $(s_1,s_2,s_3,s_4,s_5,s_6)=(s_1,s_2,\no{s}_2,\no{s}_1,s_1 \vee 
 s_2,\no{s}_1\wedge\no{s}_2)$. \\
 $(\Leftarrow)$. 
 Let $(s_1,s_2,s_3,s_4,s_5,s_6)=(s_1,s_2,\no{s}_2,\no{s}_1,s_1 \vee 
 s_2,\no{s}_1\wedge\no{s}_2)$, with $s_1$ and $s_2$ being contraries. 
 From  Corollary~\ref{COR:EASYSQUARE}, it follows that $(s_1,s_2,s_3,s_4)$ is a square of opposition. Then, by relations $(ii)$ and $(iii)$ in  Definition~\ref{DEF:HEX}, it follows that $(s_1,s_2,s_3,s_4,s_5,s_6)$ is a hexagon of opposition.

\end{proof}
\begin{remark}
Assume that $s_1$ and $s_2$ are contraries. Then, 
by Corollary \ref{COR:EASYSQUARE}, the 
quadruple $(s_1,s_2,\no{s}_2,\no{s}_1)$ is a square of opposition, and  by Definition~\ref{DEF:HEX}, the
tuple $(s_1,s_2,\no{s}_2,\no{s}_1,s_1 \vee s_2,\no{s}_1\wedge\no{s}_2)$  is a hexagon of opposition.
\end{remark}
We now consider  relations among a tripartition of the set of all coherent assessments $\Pi$ and a hexagon of opposition.
\begin{remark}
Given a hexagon  of opposition $(s_1,s_2,s_3,s_4,s_5,s_6)$,
we observe that the sentence $s_6=s_3 \wedge s_4$ represents  the pair  $(\mathcal{F},\I_{6})$, where $\I_6=\mathcal{I}_3\cap \mathcal{I}_4$. Moreover,  by Remark~\ref{REM:1},  $\pi(\I_6)=\pi(\mathcal{I}_3\cap \mathcal{I}_4)=
\pi(\mathcal{I}_3)\cap \pi(\mathcal{I}_4)$.
Therefore, based on Theorem~\ref{THM:SQDP}, the triple
$(\mathcal{D}_1,\mathcal{D}_2,\mathcal{D}_3)$, where 
$\mathcal{D}_1=\pi(\I_{1})$, $\mathcal{D}_2=\pi(\I_{2})$, and $\mathcal{D}_3=\pi(\I_{6})$, is a tripartition of $\Pi$. Conversely, based on Corollary \ref{COR:TRIP}, given a tripartition $(\mathcal{D}_1,\mathcal{D}_2,\mathcal{D}_3)$ of $\Pi$, the sequence $(s_1,s_2,s_3,s_4,s_5,s_6)$ where 
 ${s_k:(\mathcal{F},\mathcal{I}_k)}$, $k=1,\ldots, 6$, with  $\coh(\mathcal{I}_1)=\mathcal{D}_1$, $\coh(\mathcal{I}_2)=\mathcal{D}_2$,  $\coh(\mathcal{I}_3)=\mathcal{D}_1\cup \mathcal{D}_3$, $\coh(\mathcal{I}_4)=\mathcal{D}_2\cup \mathcal{D}_3$,
 $\coh(\mathcal{I}_5)=\mathcal{D}_1\cup \mathcal{D}_2$, and  $\coh(\mathcal{I}_6)=\mathcal{D}_3$, is a hexagon of opposition (see also  \cite{Ciucci2015,DuboisPrade2012,Dubois2015}).
 \end{remark}
 Next, we consider  relations among a tripartition of $[0,1]^n$ and a hexagon of opposition.
 \begin{remark}\label{REM:HEXTRIP}
 Based on  Corollary~\ref{COR:NTRIP}, we can also  construct a hexagon of opposition by starting from a tripartition of the whole set $[0,1]^n$. Specifically,   given a tripartition $(\mathcal{B}_1,\mathcal{B}_2,\mathcal{B}_3)$  of $[0,1]^n$, let  $\mathcal{I}_1=\mathcal{B}_1$,  $\mathcal{I}_2=\mathcal{B}_2$, $\mathcal{I}_3=\mathcal{B}_1\cup \mathcal{B}_3$,  $\mathcal{I}_4=\mathcal{B}_2\cup \mathcal{B}_3$,
$\mathcal{I}_5=\mathcal{B}_1\cup \mathcal{B}_2$, and
$\mathcal{I}_6=\mathcal{B}_3$.
 For any sequence of $n$ conditional events $\mathcal{F}$, the tuple $(s_1,s_2,s_3,s_4,s_5,s_6)$, where  $s_k:(\mathcal{F},\mathcal{I}_k)$, $k=1,\ldots,6$, is a hexagon  of opposition. 
\end{remark}

\begin{theorem}\label{THM:HEXREL}
Given a hexagon of opposition $(s_1,s_2,s_3,s_4,s_5,s_6)$, by Definition \ref{DEF:HEX} all relations among the basic sentence types in  the square $(s_1,s_2,s_3,s_4)$ hold.  Moreover, by Theorem~\ref{THM:HEX} (and also by Remark~\ref{REM:REL}),  the following relations hold:
\begin{enumerate}[(i)]
    \item $s_1$ and $s_6$ 
     are contraries
    (since $s_6=\no{s}_2\wedge \no{s}_1$ and  $\coh(\I_1\cap     \no{\I}_2\cap \no{\I}_1)=\emptyset$); \label{ENU:1}
    \item $s_2$ and $s_6$ are contraries (since $s_6=\no{s}_2\wedge \no{s}_1$ and  $\coh(\I_2\cap     \no{\I}_2\cap \no{\I}_1)=\emptyset$);
	\item $s_3$ is a subaltern of $s_6$ (since $s_6=s_3\wedge s_4$);
	\item $s_4$ is a subaltern of $s_6$ (since $s_6=s_3\wedge s_4$);
	\item $s_5$ is a subaltern of $s_1$
	(since $s_5=s_1\vee s_2$);
	\item $s_5$ is a subaltern of $s_2$ (since $s_5=s_1\vee s_2$);
	\item $s_5$ and $s_3$ are subcontraries (as $s_5=s_1\vee s_2$ and $s_3=\no{s}_2$, hence 
	$\coh(\I_1\cup \I_2) \cup \no{\I}_2)=\Pi$);
	\item $s_5$ and $s_4$ are subcontraries
(as $s_5=s_1\vee s_2$ and $s_4=\no{s}_1$, hence 
	$\coh(\I_1\cup \I_2) \cup \no{\I}_1)=\Pi$);
	\item $s_5$ and $s_6$ are contradictories (as $s_5=s_1\vee s_2$, $s_6=s_3\wedge s_4=\no{s}_2\wedge \no{s}_1$,  hence 
	$\coh((\I_1\cup \I_2)\cap (\no{\I}_1\cap \no{\I}_2))  =\emptyset$) and 
	$\coh((\I_1\cup \I_2)\cup (\no{\I}_1\cap \no{\I}_2))  =\Pi$).
\end{enumerate}
\end{theorem}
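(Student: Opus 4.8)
The plan is to reduce each of the nine asserted relations to an elementary set-theoretic statement about the t-coherent parts, and then to check all of them simultaneously against the normal form delivered by Theorem~\ref{THM:HEX}. First I would invoke Theorem~\ref{THM:HEX} to rewrite the hexagon as $(s_1,s_2,\no{s}_2,\no{s}_1,s_1\vee s_2,\no{s}_1\wedge\no{s}_2)$ with $s_1$ and $s_2$ contraries, so that by Definition~\ref{DEF:OPER} the associated assessments are $\I_3=\no{\I}_2$, $\I_4=\no{\I}_1$, $\I_5=\I_1\cup\I_2$ and $\I_6=\no{\I}_1\cap\no{\I}_2$. The first clause of the statement, that all square relations among $(s_1,s_2,s_3,s_4)$ hold, then needs no work at all: it is exactly clause (i) of Definition~\ref{DEF:HEX}.

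The key preliminary step is to record that $\coh$ acts like a Boolean homomorphism relative to $\Pi$. From $\coh(\I)=\Pi\cap\I$ one obtains $\coh(\I\cap\mathcal{J})=\coh(\I)\cap\coh(\mathcal{J})$ (Remark~\ref{REM:1}), $\coh(\I\cup\mathcal{J})=\coh(\I)\cup\coh(\mathcal{J})$, and $\coh(\no{\I})=\Pi\setminus\coh(\I)$. Abbreviating $\D_1=\coh(\I_1)$, $\D_2=\coh(\I_2)$ and $\D_3=\Pi\setminus(\D_1\cup\D_2)$, the contrariety of $s_1,s_2$ gives $\D_1\cap\D_2=\emptyset$, so $(\D_1,\D_2,\D_3)$ is a tripartition of $\Pi$. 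Pushing $\coh$ through the four derived sentences then yields the closed forms $\coh(\I_3)=\D_1\cup\D_3$, $\coh(\I_4)=\D_2\cup\D_3$, $\coh(\I_5)=\D_1\cup\D_2$ and $\coh(\I_6)=\D_3$, which is precisely the data needed to read off every relation.

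With these forms fixed, and using the characterizations in Remark~\ref{REM:REL} (contrariety as $\coh(\I)\cap\coh(\mathcal{J})=\emptyset$, subcontrariety as $\coh(\I)\cup\coh(\mathcal{J})=\Pi$, subalternation as $\coh(\I)\subseteq\coh(\mathcal{J})$, and contradiction as both), each item collapses to a one-line check on the partition: items (i)--(ii) reduce to $\D_1\cap\D_3=\D_2\cap\D_3=\emptyset$; items (iii)--(vi) to the trivial inclusions $\D_3\subseteq\D_i\cup\D_3$ and $\D_i\subseteq\D_1\cup\D_2$; items (vii)--(viii) to the covering $\D_1\cup\D_2\cup\D_3=\Pi$; and item (ix) to the pair $(\D_1\cup\D_2)\cap\D_3=\emptyset$ together with $(\D_1\cup\D_2)\cup\D_3=\Pi$, i.e. $s_5$ and $s_6$ are complementary in $\Pi$. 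I would present these as a short enumerated list matching the statement.

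I do not anticipate a genuine obstacle here: once the homomorphism properties of $\coh$ and the tripartition $(\D_1,\D_2,\D_3)$ are in place, every relation is an immediate fact about a three-block partition. The one place demanding care is the De Morgan bookkeeping for $s_6=\no{s}_2\wedge\no{s}_1$, where I must confirm $\coh(\I_6)=(\Pi\setminus\D_1)\cap(\Pi\setminus\D_2)=\D_3$ and not, say, $\Pi\setminus(\D_1\cap\D_2)$; keeping intersection and union straight across the complement is the only step where a slip is easy.
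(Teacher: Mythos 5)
Your proposal is correct and follows essentially the same route as the paper, which gives no separate proof environment but justifies each item inline by appealing to Theorem~\ref{THM:HEX} (to get $s_3=\no{s}_2$, $s_4=\no{s}_1$, $s_5=s_1\vee s_2$, $s_6=\no{s}_1\wedge\no{s}_2$) and to the characterizations of the relations in Remark~\ref{REM:REL}, exactly as you do. Your explicit reduction to the tripartition $(\D_1,\D_2,\D_3)$ is just a tidier bookkeeping of the same argument, and it matches the paper's own discussion of the triangles $T_1$ and $T_2$ immediately after the theorem.
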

Figure~\ref{FIG:HexS} illustrates all the relations in the hexagon of opposition described in Theorem~\ref{THM:HEXREL}. This figure also shows the two triangles $T_1:(s_1,s_2,s_6)$ and $T_2:(s_3,s_4,s_5)$.  We note that the sides of  $T_1$ consist of contrary relations, whereas the sides of $T_2$ consist of subcontrary relations. Moreover, 
the coherent part of the imprecise assessments defined by sentences in $T_1$
(i.e., $\D_1=\coh(\I_1)$, $\D_2=\coh(\I_2)$ and
$\D_3=\coh(\I_6)$)
forms  a tripartition $(\mathcal{D}_1,\mathcal{D}_2,\mathcal{D}_3)$ of $\Pi$. Whereas, the
 imprecise assessments defined by sentences in $T_2$ are such that
 $\coh(\I_3)=\D_1\cup \D_3$,
$\coh(\I_4)=\D_2\cup \D_3$, and
$\coh(\I_5)=\D_1\cup \D_2$.

By basing the hexagon of opposition on the square of opposition $(A(x),E(x),I(x),O(x))$ (as introduced in Section~\ref{SEC:GSQUARE}) we obtain the following hexagon of opposition: $(A(x),E(x),I(x),O(x),U(x),Y(x))$ with $x\in ]1/2,1]$, where $U(x)$ denotes $A(x)\vee E(x)$ and $Y(x)$ denotes $I(x)\wedge O(x)$ (see Table~\ref{Table:GH}). Figure~\ref{FIG:hexagon} illustrates the hexagon $(A(x),E(x),I(x),O(x),U(x),Y(x))$ with $x\in ]1/2,1]$.
\begin{table}[!h]
	\centering
	\begin{tabular}{llll}\hline \hline
		\multicolumn{2}{l}{Sentence} & Probability constr. & Assessment on $P|S$  \\
		\hline
$U(x):$& $A(x)\vee E(x)$ & 
$ \begin{array}{c}
p(P|S)\geq x\\
\text{or}\\
p(\no{P}|S)\geq x
\end{array}$
&${\mathcal{I}}_{U(x)}=[0,1-x]\cup [x,1] $\\[2.em] 
$Y(x):$& $I(x)\land O(x)$ & 
		$ \left 
		\{\begin{array}{l}
		 p(P|S)> 1-x\\
		p(\no{P}|S)>1- x
		\end{array}
		\right.$ 
		&
		${\mathcal{I}}_{Y(x)}=]1-x,x[$\\ 
		\hline        
		$U(1):$&
		$ \begin{array}{c}
		$Every $S$ is $P$$\\
		$or$\\
		$No $S$ is $P$$
		\end{array}$
		  &
		  $ \begin{array}{c}
		  p(P|S)=1\\
		  \text{or}\\
		  p(\no{P}|S)=1
		  \end{array}$
		  &
		   $\mathcal{I}_U=\{0\}\cup\{1\}$\\[2.em]
$Y(1):$&$
\begin{array}{c}
	$Some $S$ is $P$$\\
	$and$\\
	$Some $S$ is $\no{P}$$\\
\end{array}
$ 
  &		
				$ \left 
\{\begin{array}{l}
p(P|S)> 0\\
p(\no{P}|S)>0
\end{array}
\right.$ 
& $\mathcal{I}_Y=]0,1[$ \\
\hline \hline
	\end{tabular}
	\caption{Probabilistic interpretation of the sentence types  at the top ($U$) and at the bottom ($Y$) of the hexagon of opposition involving generalized quantifiers $Q$ defined by a threshold $x$ (with $x\in]\frac12,1]$) on the subject $S$ and predicate $P$ and the respective imprecise probabilistic assessments ${\mathcal{I}}_{U(x)}$, and $\mathcal{I}_{Y(x)}$  on the conditional event $P|S$ (above). When $x=1$, we obtain our probabilistic interpretation of the  traditional sentence types $U$, $Y$.}
	\label{Table:GH}
\end{table}

\begin{figure}
\begin{center}
\ifx\JPicScale\undefined\def\JPicScale{1.4}\fi
\psset{unit=\JPicScale mm}
\psset{linewidth=0.3,dotsep=1,hatchwidth=0.3,hatchsep=1.5,shadowsize=1,dimen=middle}
\psset{dotsize=0.7 2.5,dotscale=1 1,fillcolor=black}
\psset{arrowsize=1 2,arrowlength=1,arrowinset=0.25,tbarsize=0.7 5,bracketlength=0.15,rbracketlength=0.15}
\begin{pspicture}(25,0)(95,85)
\psbezier[linewidth=2pt]{<-}(25,31.88)(25,31.88)(25,50.62)(25,57.5)
\psline[linewidth=2pt]{<-}(95,31.88)(95,57.5)
\psline[linewidth=2pt]{<-}(31.25,21.25)(53.75,8.75)
\psline[linewidth=2pt]{<-}(66.25,81.25)(88.75,68.75)
\psline[linewidth=2pt]{->}(31.25,68.75)(53.75,81.25)
\psline[linewidth=2pt]{->}(66.25,8.75)(88.75,21.25)
\rput(95,62.5){$E(x): p(P|S)\leq 1-x$}
\rput(60,85){$U(x):p(P|S)\in [0,1-x]\cup [x,1]$}
\rput(25,63.12){$A(x):p(P|S)\geq x$}
\rput(25,25.62){$I(x):p(P|S)>1-x$}
\rput(60,5){$Y(x):1-x < p(P|S)<x$}
\rput(95,25.62){$O(x):p(P|S)<x$}
\psline[linewidth=2pt,linestyle=dashed,dash=1 1,linecolor=blue](42.5,62.5)(77.5,62.5)
\psline[linewidth=2pt,linestyle=dotted,linecolor=red](42.5,25)(77.5,25)
\psline[linewidth=2pt,linestyle=dashed,dash=1 1,linecolor=blue](35,30.62)(85,55.62)
\psline[linewidth=2pt,linestyle=dotted,linecolor=red](35.62,30)(85.62,55)
\psline[linewidth=2pt,linestyle=dashed,dash=1 1,linecolor=blue](35.62,55)(85.62,30.62)
\psline[linewidth=2pt,linestyle=dotted,linecolor=red](35,54.38)(85,30)
\psline[linewidth=2pt,linestyle=dotted,linecolor=red](59.5,81.1)(59.5,8.3)
\psline[linewidth=2pt,linestyle=dashed,dash=1 1,linecolor=blue](60.4,81.1)(60.4,8.6)
\psline[linewidth=2pt,linestyle=dashed,dash=1 1,linecolor=blue](29.8,55)(57,8.9)
\psline[linewidth=2pt,linestyle=dashed,dash=1 1,linecolor=blue](89.9,55.1)(63.3,8.8)
\psline[linewidth=2pt,linestyle=dotted,linecolor=red](56.5,81)(29.9,31.8)
\psline[linewidth=2pt,linestyle=dotted,linecolor=red](63.6,81)(89.7,31.4)
\end{pspicture}

\end{center}
\caption{\label{FIG:hexagon} Probabilistic hexagon of opposition defined on the six sentence types  with the threshold $x\in]\frac12,1]$ (see also Table~\ref{Table:GS}). It provides a new interpretation of the hexagon of opposition, which we compose of the probabilistic square of opposition and the two additional vertices $U(x)$ (i.e., $A(x) \vee E(x)$; top)  and $Y(x)$ (i.e., $I(x) \wedge O(x)$; bottom). For the meaning of the lines see Figure~\ref{fig:QsquareS}.}
\end{figure}
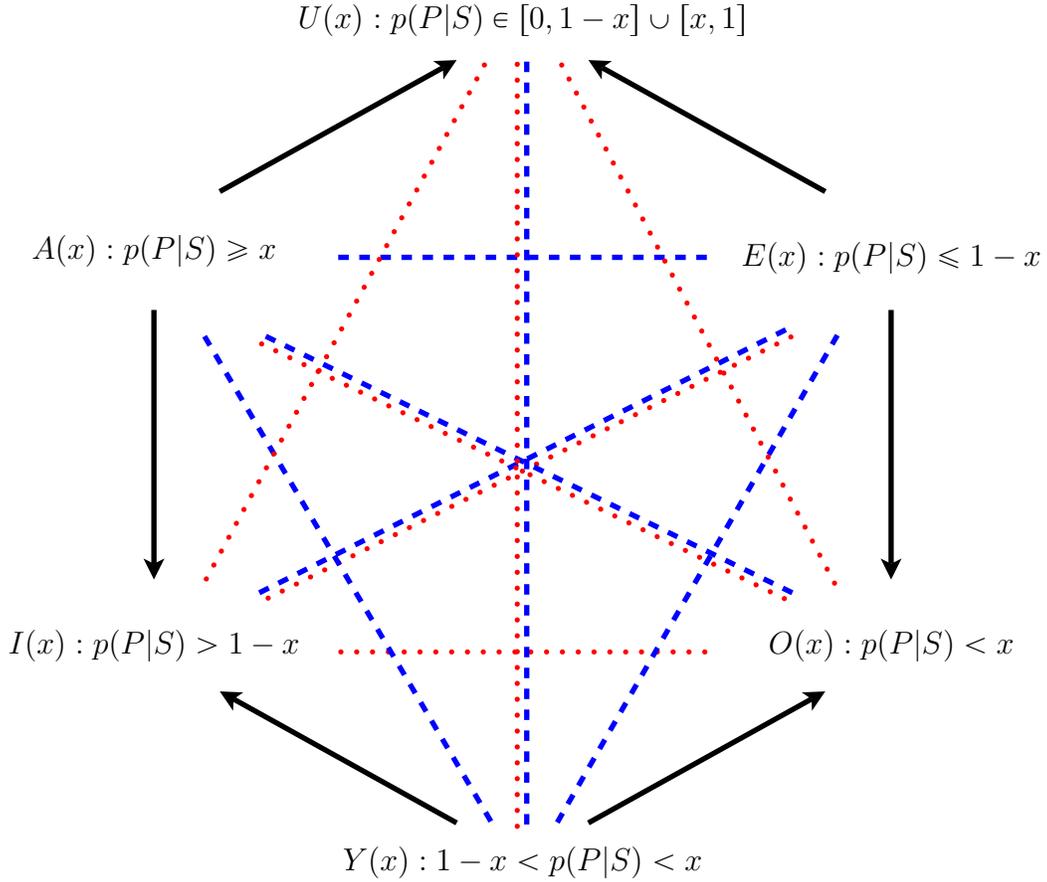
We now consider a generalization of  the  hexagon of opposition  $(A(x),E(x),I(x),O(x),U(x),Y(x))$ by considering $n$ conditional events. In particular, let $\mathcal{F}=(P_1|S_1,\ldots,P_n|S_n)$ be a sequence of $n$ conditional events. Exploiting Remark~\ref{REM:HEXTRIP}, we construct a hexagon of opposition by considering the following tripartition of $[0,1]^n$: $(\mathcal{B}_1(x),\mathcal{B}_2(x), \mathcal{B}_3(x))$, with $x\in]1/2,1]$, where
\[
\begin{array}{ll}
\mathcal{B}_1(x)=\{(p_1,\ldots,p_n)\in[0,1]^n:\sum_{i=1}^n \frac{p_i}{n}\geq x\},\\
\mathcal{B}_2(x)=\{(p_1,\ldots,p_n)\in[0,1]^n:\sum_{i=1}^n \frac{p_i}{n}\leq 1- x\},\\
\mathcal{B}_3(x)=\{(p_1,\ldots,p_n)\in[0,1]^n:1- x<\sum_{i=1}^n \frac{p_i}{n}<x \}.\\
\end{array}
\]
We obtain the following (generalized) hexagon of opposition  $(A(x),E(x),I(x),O(x),U(x),Y(x))$, with the quantified statements $A(x):(\F,\mathcal{I}_{A(x)})$, $E(x):(\F,\mathcal{I}_{E(x)})$, $I(x):(\F,\mathcal{I}_{I(x)})$, $O(x):(\F,\mathcal{I}_{O(x)})$, $U(x):(\F,\mathcal{I}_{U(x)})$, $Y(x):(\F,\mathcal{I}_{Y(x)})$,
where
\[
\begin{array}{ll}
\mathcal{I}_{A(x)}=\mathcal{B}_1(x),\; 
\mathcal{I}_{E(x)}=\mathcal{B}_2(x),\; \mathcal{I}_{Y(x)}=\mathcal{B}_3(x),\\
\mathcal{I}_{I(x)}=\mathcal{B}_1(x) \cup \mathcal{B}_3(x)=\{(p_1,\ldots,p_n)\in[0,1]^n: \sum_{i=1}^n \frac{p_i}{n}>1-x\},\\
\mathcal{I}_{O(x)}=\mathcal{B}_2(x)\cup \mathcal{B}_3(x)=\{(p_1,\ldots,p_n)\in[0,1]^n: \sum_{i=1}^n \frac{p_i}{n}<x\},\\
\mathcal{I}_{U(x)}=\mathcal{B}_1(x)\cup \mathcal{B}_2(x)=\{(p_1,\ldots,p_n)\in[0,1]^n: \sum_{i=1}^n \frac{p_i}{n}\geq x \text{ or } \sum_{i=1}^n \frac{p_i}{n}\leq 1-x\}.
\end{array}
\]

 \section{Concluding Remarks}\label{SEC:CONCL}

Finally, we note that conditional probability interpretations of quantified statements were also proposed in psychology (see, e.g.,  \cite{cohen99,cohen12,oaksford07,pfeifer06,pfeifer13b,pfeifer05a,pfeiferTulkkiSub}), since
generalized quantifiers  are  psychologically much more  plausible compared to 
the traditional logical quantifiers, as the latter are either too strict  ($\forall$ does not allow for exceptions) or too weak ($\exists$ quantifies over at least one object) for formalizing everyday life sentences. Recent experimental data suggests that people negate conditionals and quantified statements mainly by  building contraries (in the sense of  inferring $p(\neg C|A)=1-x$ from the negated $p(C|A)=x$) but hardly ever by building contradictories (in the sense of  inferring $p(C|A)<x$ from the negated $p(C|A)=x$; see \cite{pfeifer12x,pfeiferTulkkiSub}). However, this empirical result calls for further experiments. The square   presented  in Section~\ref{SEC:GSQUARE} and the hexagon presented in Section~\ref{SEC:HEX} can serve as a new rationality framework for formal-normative and psychological investigations of  basic relations among quantified statements.

\section*{Acknowledgement}
 
We thank \emph{Deutsche Forschungsgemeinschaft} (DFG),  \emph{Fondation Maison des Sciences de l'Homme} (FMSH), and \emph{Villa Vigoni} for supporting joint meetings at Villa Vigoni where parts of this work originated (Project: ``Human Rationality: Probabilistic Points of View''). Niki Pfeifer is supported by his DFG project PF~740/2-2 (within the SPP1516 ``New Frameworks of Rationality'').  Giuseppe Sanfilippo is supported by the INdAM--GNAMPA Project (2016 Grant U 2016/000391).


\end{document}